\newtheorem{theorem}{Theorem}[section]
\newtheorem{proposition}[theorem]{Proposition}
\newtheorem{lemma}[theorem]{Lemma}
\newtheorem{corollary}[theorem]{Corollary}
\theoremstyle{definition}
\newtheorem{definition}[theorem]{Definition}
\newcommand{\N}{\mathbb{N}}
\newcommand{\Z}{\mathbb{Z}}
\newcommand{\Q}{\mathbb{Q}}
\newcommand{\R}{\mathbb{R}}
\newcommand{\C}{\mathbb{C}}
\newcommand{\tb}{\widetilde{b}}
\newcommand{\bpsi}{\overline{\psi}}
\newcommand{\biota}{\overline{\iota}}
\newcommand{\tiota}{\widetilde{\iota}}
\newcommand{\Js}{$\mathcal{Z}$}
\newcommand{\bN}{\overline{N}}
\newcommand{\tN}{\widetilde{N}}
\newcommand{\Cs}{$\mathrm{C}^*$-al\-ge\-bra}
\newcommand{\Css}{$\mathrm{C}^*$-sub\-al\-ge\-bra}
\newcommand{\tpsi}{{\widetilde{\psi}}}
\newcommand{\tp}{{\widetilde{p}}}
\newcommand{\tr}{{\mathrm{tr}}}
\newcommand{\tvarphi}{{\widetilde{\varphi}}}
\DeclareMathOperator{\Image}{Im}
\DeclareMathOperator{\id}{id}
\DeclareMathOperator{\Ad}{Ad}
\DeclareMathOperator{\End}{End}
\DeclareMathOperator{\Lip}{Lip}
\DeclareMathOperator{\supp}{supp}
\DeclareMathOperator{\rank}{rank}
\DeclareMathOperator{\diag}{diag}
\begin{document}
\title{Endomorphisms of \Js-absorbing  \Cs s\\ without conditional expectations}
\author{Yasuhiko Sato \thanks{The author was supported by JSPS KAKENHI Grant Numbers 19K03516 and the Department of Mathematical Sciences,  Kyushu University.}}
\date{}
\maketitle
\vspace{-20pt}
{\centering \emph{Dedicated to the memory of Eberhard Kirchberg}\par}

\begin{abstract} 
We construct an endomorphism of the Jiang-Su algebra \Js{} which does not admit a conditional expectation. This answers a question in the testamentary homework by E.\,Kirchberg. As an application, it is shown that any unital separable nuclear \Js-absorbing \Cs{} is non-transportable in the Cuntz  algebra $\mathcal{O}_2$.
\end{abstract}

\section{Introduction}\label{Sec1}
Recent developments in the theory of classifications of amenable \Cs s were achieved by numerous researchers, and consequently a very abstract class of unital separable simple \Cs s has now been classified by K-theoretical invariants; see \cite{K1}, \cite{K2}, \cite{KP}, \cite{P}, \cite{GLN}, \cite{GLN2}, \cite{TWW}, \cite{EN}, \cite{EGLN}, and \cite{EGLN2}. The Kirchberg-Phillips classification theorem has been recognized as one of the most successful results in the classification program established by G.\,A.\,Elliott. For an overview of the history and context of Elliott's classification programme, we refer the reader to \cite{ET}, \cite{Win2}, and \cite{Whi}.

As an application of the classification theorem, we are now in position to follow a typical but  crucial strategy consists of the following three steps: Here we assume that it is required to show a certain  property, denoted by property $X$, for a given unital separable simple classifiable \Cs{} $A$. 
The first step in the strategy is to construct an amenable \Cs{} $B$ with the  property $X$, which may be expected naturally. Second, by modifying $B$ and keeping the property $X$, it allows one to reconstruct a unital separable simple amenable \Cs{} $B_A$ with the property $X$ whose K-theoretical invariant coincides with that of $A$. In other words, at the revel of classification invariants $B_A$ is isomorphic to $A$. Finally, refining $B_A$ such that it is classifiable further, the classification theorem enables us to show that $A$ is isomorphic to $B_A$ as \Cs s, this leads to the desired property $X$ 
of $A$. If the property $X$ is compatible with tensor products, the tensor product with the Jiang-Su algebra \Js{} easily induces the classifiability, see \cite{MS}, \cite{SWW}, \cite{CETWW}.

This classification strategy can be applied to ``the existence of non-approximately inner flow'' or more generally ``the existence of a flow which realizes a possible KMS-bundle'' as the property $X$ in UHF algebras, which in fact provides a counter example to the Powers-Sakai conjecture \cite{Kis}, \cite{MS}, and a concrete realization of possible KMS-bundles on classifiable \Cs s  \cite{Thoms2}, \cite{ET}, \cite{EST}, \cite{ES}. 

In this paper, we apply the classification strategy to show the existence of a unital endomorphism without conditional expectations for classifiable \Cs s as follows, that has been studied in Kirchberg's manuscript \cite{EK}. 

\begin{theorem}\label{Thm1.1}
\begin{item}
\vspace{-15pt}
\item[\rm (i)] There exists an endomorphism of the Jiang-Su algebra \Js{} which does not have a conditional expectation.
\item[\rm (ii)] Suppose that $A$ and $B$ are unital nuclear \Js-absorbing $\mathrm{C}^*$-algebras and that there exists a unital embedding of $B$ into $A$. Then there exists a unital embedding of $B$ into $A$ which does not have a conditional expectation.
\end{item}
\end{theorem}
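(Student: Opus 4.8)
The plan is to treat (i) as the essential case and to deduce (ii) from it by a tensor-product argument built on \Js-absorption. I will therefore first record the reduction and then indicate how I would construct the endomorphism in (i).

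For the reduction, suppose (i) is available, so we have a unital endomorphism $\varphi\colon\cZ\to\cZ$ admitting no conditional expectation onto $\varphi(\cZ)$. Given the data of (ii), fix isomorphisms $B\cong B\otimes\cZ$ and $A\cong A\otimes\cZ$ coming from \Js-absorption and define $\psi\colon B\to A$ as the composite $B\xrightarrow{\cong}B\otimes\cZ\xrightarrow{\iota\otimes\varphi}A\otimes\cZ\xrightarrow{\cong}A$. Since $\iota$ is a unital embedding and $\varphi$ is injective (a nonzero endomorphism of the simple algebra $\cZ$), and $\cZ$ is nuclear, $\iota\otimes\varphi$ is a unital embedding, hence so is $\psi$, and its range corresponds to $\iota(B)\otimes\varphi(\cZ)\subseteq A\otimes\cZ$. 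Now assume, for contradiction, that a conditional expectation $E\colon A\to\psi(B)$ exists; transporting it through the fixed isomorphism yields a conditional expectation $\tilde E\colon A\otimes\cZ\to\iota(B)\otimes\varphi(\cZ)$. Choose any state $\omega$ on $A$ and set $F(z)=(\omega\otimes\id_{\cZ})\bigl(\tilde E(1_A\otimes z)\bigr)$ for $z\in\cZ$. Because $\tilde E(1_A\otimes z)$ lies in $\iota(B)\otimes\varphi(\cZ)$, slicing the first leg lands $F$ in $\varphi(\cZ)$; for $z=\varphi(w)$ the element $1_A\otimes\varphi(w)$ already lies in the range of $\tilde E$, so $F(\varphi(w))=\varphi(w)$; and the bimodule identity $\tilde E\bigl((1_A\otimes\varphi(w))x\bigr)=(1_A\otimes\varphi(w))\tilde E(x)$ together with $\omega(1_A)=1$ gives $F(\varphi(w)z)=\varphi(w)F(z)$. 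Thus $F$ is a conditional expectation $\cZ\to\varphi(\cZ)$, contradicting (i). This establishes (ii) modulo (i).

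For (i) I would follow the three-step classification strategy from the introduction, with property $X$ being ``carrying a unital endomorphism whose image is not the range of a conditional expectation.'' \emph{Step 1 (model).} I would first build a unital separable nuclear \Cs{} $D$, not necessarily simple or monotracial, together with a unital self-embedding whose image admits no conditional expectation, where the obstruction is certified by an invariant that does \emph{not} rely on a tracial comparison; indeed, for a monotracial target such as \Js{} every endomorphism is automatically trace-preserving, so a purely tracial certificate cannot survive. The natural candidate is a comparison/central-sequence obstruction: arrange the multiplicities of the inclusion so that any would-be expectation would have to move a distinguished positive contraction $h\in D$ outside the \Css{} $\varphi(D)$, a discrepancy visible in the central sequence algebra $F(D)$ or in the Cuntz semigroup rather than in a single trace. \emph{Step 2 (promotion).} Keeping this obstruction intact, I would pass to an inductive limit that is unital, separable, simple, monotracial and \Js-stable and still carries the endomorphism together with its no-expectation certificate; the \Js-stabilization supplies the approximately central unitaries needed to keep the obstruction robust once the extra traces are collapsed. \emph{Step 3 (classification).} The resulting algebra has the Elliott invariant of \Js{} (trivial $K$-theory, unique trace, nuclear, simple, \Js-stable), so the classification theorem identifies it with \Js, and transporting the endomorphism along this isomorphism yields the desired endomorphism of \Js.

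The main obstacle is Step 1 together with its compatibility with Steps 2--3: one must isolate a \emph{certificate of non-existence of a conditional expectation} that is simultaneously (a) genuinely valid for the model inclusion, (b) stable under the simplicity-forcing and \Js-stabilizing modifications and under passage to the inductive limit, and (c) transported by the classifying isomorphism. Because \Js{} is monotracial, this certificate must be non-tracial in nature, so the crux is to phrase the obstruction locally — for instance as an incompatibility between comparison of the fixed element $h$ and any bimodule projection onto $\varphi(\cZ)$ — and to verify that this local incompatibility is preserved at every stage. A secondary technical point is to ensure that it is the \emph{endomorphism}, and not merely an abstract copy of the subalgebra, that is carried through the construction and the final isomorphism, since (ii) requires an honest map $\psi\colon B\to A$; this is where the reduction above and the explicit inductive-limit bookkeeping must be reconciled.
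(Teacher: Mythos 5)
Your reduction of (ii) to (i) is correct, and it is essentially the paper's own proof of (ii): the paper assumes a conditional expectation $E_\eta$ for $\eta=\iota\otimes\varphi$ and slices it with a state $\omega$ of $B$ via $E_\varphi(a)=(\omega\otimes\id_{\cZ})\circ E_\eta(1_A\otimes a)$, which is the same Tomiyama-bimodule computation you carry out on the image side with a state of $A$. The genuine gap is (i), which carries all the content of the theorem, and there your text is a plan rather than a proof---as you concede in your final paragraph. You never exhibit the model algebra $D$, never define the self-embedding, never formulate the ``comparison/central-sequence certificate'' that is supposed to obstruct an expectation, and give no mechanism by which such a certificate would survive the simplicity-forcing inductive limit or the \Js-stabilization. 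Non-existence of an expectation is a property of the inclusion, not of the subalgebra up to isomorphism, and inductive limits can easily create expectations that do not exist at finite stages; so each of these omissions is exactly where the difficulty lives. Steps 1--2 of your outline are therefore unproved, and Step 3 (transporting the endomorphism, not just the algebra, through the classifying isomorphism) is, as you yourself suspect, a further unresolved problem in your formulation.

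For contrast, the paper resolves precisely these three points with elementary tools, and differently from your outline. First, the certificate is a boundary-versus-continuity argument in dimension drop algebras (Proposition \ref{Prop2.2}(i)): a Tomiyama expectation for the relevant inclusion would have to agree with $h\otimes 1_d$ against functions supported in $[0,1/2]$ and vanish against functions supported in $[1/2,1]$, contradicting continuity at $t=1/2$; the same argument shows that the canonical inclusions $\eta_n\colon I_n\hookrightarrow C_n=C([0,1])\otimes M_{d_n}$ admit no expectation. Second---this is the key idea your plan lacks---the connecting maps $\varphi_n\colon I_n\to I_{n+1}$ of an inductive system with limit \Js{} are rebuilt (by inserting the identity eigenvalue section $\xi_e=\id_{[0,1]}$, Proposition \ref{Prop2.2}(ii)) so that they \emph{themselves} admit conditional expectations while remaining approximately standard; hence if the limit embedding $\iota_0\colon\cZ\to B_0=\varinjlim C_n$ had an expectation, composing it back down with the expectations for the $\varphi_n$ would produce one for $\eta_1$, a contradiction (Proposition \ref{Prop3.2}). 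Third, instead of classifying an abstract pair consisting of an algebra and an endomorphism, the paper composes concrete maps: two expectationless embeddings of \Js{} into UHF algebras $B_0$, $B_1$ of relatively prime type are glued, along a path of unitaries implementing an asymptotic unitary equivalence, into an expectationless embedding $\psi\colon\cZ\to I(B_0,B_1)$ (Lemma \ref{Lem3.1}), and $I(B_0,B_1)$ embeds unitally into \Js{} by \cite[Proposition 3.3]{RW}; since an expectation for $\iota\circ\psi$ composes with $\iota$ to give one for $\psi$, the resulting endomorphism of \Js{} has no expectation. To salvage your outline you would need to supply analogues of Proposition \ref{Prop2.2}(ii) and Lemma \ref{Lem3.1}; without them the proposal does not prove (i).
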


This theorem answers a question in \cite[p.795 and p.1220]{EK}, which asks whether the CAR algebra $M_{2^{\infty}}$ has a unital endomorphism with no conditional expectations (see Corollary \ref{Cor3.3}). In consequence of this, we see that the  Cuntz algebra $\mathcal{O}_2$ is not transportable in itself (see Theorem \ref{Thm4.2}). 

\medskip

Yuhei Suzuki has informed the author that his complete description of intermediate operator algebras given in \cite{Suz} provided an endomorphism of the Cuntz algebra $\mathcal{O}_{\infty}$ which does not admit a conditional expectation. 

\bigskip

Before going to the next section, let us collect some basic notations and terminologies. Throughout the paper, let $A^+$ denote the cone of positive elements in a \Cs{} $A$. In the case of a unital \Cs{} $A$, we denote by $1_A$ the unit of $A$. For $n\in\N$, let  $M_n$ be the \Cs{} of complex $n\times n$ matrices, $\{e_{i, j}^{(n)}\ :\ i, j=1,2,...,n\}$ the set of canonical matrix units of $M_n$, and $\tr_n$ the normalized trace on $M_n$. To simplify, we set $1_n =1_{M_n}$. 

For a \Cs{} $A$, we denote by $\id_A$ the identity map on $A$, i.e., $\id_A(a)=a$ for any $a\in A$. For two \Cs s $A$ and $B$, by an {\it embedding} $\varphi$ of $A$ into $B$ we mean an injective $*$-homomorphism from $A$ to $B$. 

A \emph{conditional expectation} $E$ for an embedding $\varphi$ of $A$ into $B$ is a completely positive contractive map from $B$ to $A$ such that $E\circ\varphi =\id_A$.  Because of Tomiyama's theorem (see \cite{Lan}, \cite{BO}), it is well-known that $E(\varphi(a)\, b\, \varphi(c))=a\,E(b)\,c$ for any $a$, $c\in A$, and $b\in B$. If an embedding $\varphi$ is unital (i.e., $\varphi(1_A)=1_B$), then so is a conditional expectation for $\varphi$.

\section{Jiang and Su's connecting maps reconstructed}\label{Sec2}
 In the original construction of the Jiang-Su algebra \Js{}  \cite[Proposition 2.5]{JS}, the connecting maps can not have conditional expectations, even in the constructions \cite[Theorem 4.1]{JS} and \cite[Theorem 2.1]{Ror} the existence of conditional expectations is not necessarily guaranteed (see Proposition \ref{Prop2.2} \rm{(i)}). For this reason, in Proposition \ref{Prop2.2} \rm{(ii)}, we shall modify the connecting maps with conditional expectations for the construction of \Js.
 
 First, we prepare a rather accurate notion for embeddings of prime dimension drop algebras which is an approximate formula of standard $*$-homomorphisms  in \cite[(2.1)]{Ror}.
 \begin{definition}\label{DefStand}
For relatively prime natural numbers $p_0$, $p_1$, and  for unital embeddings $\iota_i$ of $M_{p_i}$ into $M_{p_0}\otimes M_{p_1}$, $i=0, 1$, we denote by $I(p_0, p_1)$ the prime dimension drop algebra defined by 
\[ I(p_0, p_1)=\{ f\in C([0, 1])\otimes M_{p_0}\otimes M_{p_1}\ : \ f(i)\in \Image(\iota_i) \text{ for both } i=0, 1\}.\]
 From the definition, it follows that $I(p_0, p_1)$ is independent of the choice of unital embeddings $\iota_i$, $i=0, 1$. Let $\mu$ be the Lebesgue measure on $[0, 1]$. A unital $*$-homomorphism $\varphi$ from $I(p_0, p_1)$ into \Js{} is called \emph{standard} if 
 \[\tau_{\mathcal{Z}}\circ\varphi(f)=\int_{[0, 1]}\tr_{p_0p_1}(f(t))\ d\mu(t),\] 
 for any $f\in I(p_0, p_1)$, where $\tau_{\mathcal{Z}}$ is the unique tracial state of \Js. Let $\tp_0$ and $\tp_1$ be relatively prime natural numbers and $\varphi$  a unital embedding of $I(p_0, p_1)$ into $I(\tp_0, \tp_1)$. For a finite subset $F$ of $I(p_0, p_1)$ and $\varepsilon>0$, we say that $\varphi$ is \emph{$(F, \varepsilon)$-standard} if 
 \[ \rvert \tau\circ \varphi(f) -\int_{[0, 1]}\tr_{p_0p_1}(f(t))\ d\mu(t)\rvert< \varepsilon,\]
 for any tracial state $\tau$ of $I(\tp_0, \tp_1)$ and $f\in I(p_0, p_1)$.
  \end{definition}
 
 \begin{proposition}\label{Prop2.2} 
 Let $p_0$ and $p_1$ be relatively prime natural numbers.
 \begin{item}
\vspace{-15pt}
\item[\rm (i)] The unital embedding of $I(p_0, p_1)$ constructed in the same way as {\rm \cite[Proposition 2.5]{JS}} does not have a conditional expectation.
\item[\rm (ii)] For a finite subset $F$ of $I(p_0, p_1)$ and $\varepsilon>0$ , there exists a natural number $N$ satisfying that: for relatively prime natural numbers $\tp_0$ and $\tp_1$ with $\tp_i>N$ for $i=0, 1$, there exist a unital  $(F, \varepsilon)$-standard embedding $\varphi$ of $I(p_0, p_1)$ into $I(\tp_0, \tp_1)$ and  a conditional expectation for $\varphi$. 
\end{item}
 \end{proposition}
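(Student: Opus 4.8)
The overall strategy is to treat both parts through a common analysis of unital embeddings in Rørdam's \emph{standard form},
\[
\varphi(f)(s)=u(s)^{*}\,\diag\big(f(\lambda_{1}(s)),\dots,f(\lambda_{k}(s))\big)\,u(s),\qquad s\in[0,1],
\]
with continuous eigenvalue paths $\lambda_{i}\colon[0,1]\to[0,1]$ and a continuous unitary path $u$, and to read off when Tomiyama's bimodule identity $E(\varphi(a)\,b\,\varphi(c))=a\,E(b)\,c$ can be solved by a completely positive contractive $E$. The guiding tension is that \emph{approximate standardness} forces the paths $\{\lambda_i\}$ to sweep out $[0,1]$ (so that $\tau\circ\varphi(f)\approx\int_{0}^{1}\tr_{p_0p_1}(f(t))\,d\mu(t)$ uniformly in the extreme traces), whereas a conditional expectation is easiest to produce when the relevant paths are \emph{homeomorphisms} of $[0,1]$, so that each block can be inverted.

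For (i) the plan is a direct computation with the explicit connecting map of \cite[Proposition 2.5]{JS}. I would evaluate Tomiyama's identity at an endpoint fiber of the target $I(\tp_0,\tp_1)$, where, by the minimal dimension choices in that construction, $\ev_{0}\circ\varphi$ (resp.\ $\ev_{1}\circ\varphi$) decomposes as a direct sum that mixes \emph{both} endpoint representations $\ev_{0}$ and $\ev_{1}$ of $I(p_0,p_1)$ with fixed multiplicities. Assuming a conditional expectation $E$ existed, I would feed into the bimodule identity elements $a,c\in I(p_0,p_1)$ supported near the two different endpoints; the identity would then force $E$ to restrict to incompatible block patterns on that single endpoint fiber, contradicting positivity. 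Hence no completely positive contractive splitting can exist.

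For (ii) the plan is constructive. I would build $\varphi$ so that $m=\tp_0\tp_1/(p_0p_1)+O(1)$ of its blocks use \emph{monotone} paths $\lambda_{1},\dots,\lambda_{m}$ (homeomorphisms of $[0,1]$, with both orientations present) spread so that their point masses approximate Lebesgue measure uniformly in $s$, and a \emph{bounded} remainder of blocks are genuine endpoint evaluations $\ev_{0},\ev_{1}$ of dimension $p_0$ or $p_1$. The monotone blocks give injectivity and, for $\tp_0\tp_1$ large, approximate standardness, the bounded correction contributing a negligible fraction of the trace. The conditional expectation is then obtained by \emph{inverting} the monotone paths:
\[
E(b)(r)=\sum_{j=1}^{m} w_{j}(r)\,P_{j}\big(u(s_{j}(r))\,b(s_{j}(r))\,u(s_{j}(r))^{*}\big),\qquad s_{j}(r)=\lambda_{j}^{-1}(r),
\]
where $P_{j}$ extracts the $j$-th diagonal $p_0p_1$-block and identifies it with $M_{p_0p_1}$, and $w_{j}\ge0$ with $\sum_{j}w_{j}\equiv1$. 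A short computation then shows that $E$ is unital completely positive contractive, that $E\circ\varphi=\id$, and that it satisfies Tomiyama's identity—so it is a conditional expectation for $\varphi$; the correction blocks carry weight $0$ at interior points and do not interfere.

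The main obstacle is the simultaneous bookkeeping at the two endpoints. To be a genuine unital embedding into $I(\tp_0,\tp_1)$, the map must satisfy the dimension-drop conditions $\varphi(f)(0)\in M_{\tp_0}\otimes 1_{\tp_1}$ and $\varphi(f)(1)\in 1_{\tp_0}\otimes M_{\tp_1}$; translating these into the block data yields constraints of the form $\mu_0' p_0+\mu_1' p_1=\tp_0$ and $\nu_0' p_0+\nu_1' p_1=\tp_1$ together with integrality of the induced multiplicities. Since $\gcd(p_0,p_1)=1$, the numerical semigroup generated by $p_0$ and $p_1$ contains every sufficiently large integer, so these are solvable once $\tp_0,\tp_1>N$; this is exactly where the threshold $N$ (depending on $F,\varepsilon$ through the required fineness of the spread, and on the Frobenius number of $\{p_0,p_1\}$) is produced. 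The delicate step is to choose the paths, the weights $w_j$, the unitary $u$, and the identifications $P_j$ so that \emph{all three} requirements hold at once: the endpoint dimension drops, continuity of $E(b)$ up to the endpoints with $E(b)(0)\in M_{p_0}\otimes 1_{p_1}$ and $E(b)(1)\in 1_{p_0}\otimes M_{p_1}$, and the uniform trace estimate defining $(F,\varepsilon)$-standardness. I expect reconciling the endpoint compatibility of the $P_j$ with monotone paths of both orientations to be the hardest bookkeeping.
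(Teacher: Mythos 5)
Your sketch for (i) rests on a misreading of the Jiang--Su connecting map, and even on its own terms it never produces an actual contradiction. In the construction of \cite[Proposition 2.5]{JS} the sections of eigenvalues are $t/2$, the constant $1/2$, and $(t+1)/2$, so the endpoint fibers are $\ev_0\circ\varphi=\ev_0^{\oplus r_0}\oplus\ev_{1/2}^{\oplus(k-r_0)}$ and $\ev_1\circ\varphi=\ev_{1/2}^{\oplus(k-r_1)}\oplus\ev_1^{\oplus r_1}$: neither endpoint fiber mixes ``both endpoint representations'' as you claim; each mixes one endpoint evaluation with the \emph{interior} evaluation $\ev_{1/2}$. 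Moreover, if $a,c\in I(p_0,p_1)$ are supported near the two different endpoints, then $a\,E(b)\,c=0$ pointwise, so Tomiyama's identity only tells you that $E$ annihilates $\varphi(a)\,b\,\varphi(c)$ --- no contradiction, since a conditional expectation need not be faithful; ``incompatible block patterns contradicting positivity'' is asserted, not derived. The obstruction the paper actually exploits is not located at any single fiber: because every section has range contained in $[0,1/2]$ or in $[1/2,1]$, one takes $g_0$, $g_1$ supported in the two halves with $g_i(i)=1$ and forms the positive element $F(t)=(1-t)\varphi(g_0\otimes 1_d)(0)+t\,(1_{dk}-\varphi(g_1\otimes 1_d)(1))$, which in the block picture equals $P_{\mathrm L}+t\,P_{\mathrm M}$, with $P_{\mathrm L}$, $P_{\mathrm M}$ the block projections for the sections $t/2$ and the constant $1/2$; Tomiyama's identity then forces $E_{\varphi}(F)(t)=1_d$ for $t<1/2$ and $E_{\varphi}(F)(t)=0$ for $t>1/2$, contradicting continuity of $E_{\varphi}(F)$ at $1/2$. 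Some global, interior-point argument of this kind is what your plan is missing.

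For (ii) your plan is in essence the paper's construction --- the paper inserts exactly one monotone section $\xi_e=\id_{[0,1]}$ and defines $E_{\varphi}=k(\id_{C([0,1])\otimes M_d}\otimes\tr_k)\circ\Ad(1_{C([0,1])\otimes M_d}\otimes e_{1,1}^{(k)})$, which is your formula with a single path and weight $1$ --- but the step you explicitly defer is the entire mathematical content, not routine bookkeeping. One must build the endpoint embeddings $\tiota_i$ of $M_{\tp_i}$ into $M_d\otimes M_k$ so that the designated corner is compatible with them, concretely so that $\biota_i(e_{1,1}^{(l_i)})(1_d\otimes e_{1,1}^{(k)})=1_d\otimes e_{1,1}^{(k)}$ for both $i=0,1$; this is precisely what makes the compression $\Ad(1_d\otimes e_{1,1}^{(k)})$ carry $\Image(\tiota_i)$ into $\Image(\iota_i)\otimes e_{1,1}^{(k)}$, hence $E_{\varphi}(g)(i)\in\Image(\iota_i)$, and arranging it (the partition into $y_j$, $y_{j+1/2}$, the projections $h_j^{(i)}$, and the rank computations) occupies most of the paper's proof. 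Your extra generality also creates a difficulty you do not address: for a \emph{decreasing} path $\lambda_j$, the corresponding term of $E(b)(0)$ is a compression of $b(1)\in\Image(\tiota_1)$, which lands in a copy of $\Image(\iota_1)$ rather than $\Image(\iota_0)$, so decreasing blocks would have to carry weight $0$ at both endpoints or be discarded altogether --- the paper simply uses none. As written, both halves of your proposal stop exactly where the proof has to begin.
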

 \begin{proof}
 To simplify some notations we regard $i=0, 1$ as elements in $\Z/2\Z$, and set $d=p_0p_1$.
 
 \noindent {\rm (i)} Let $l_i$, $i=0, 1$ be natural numbers such that $l_i> 2p_{i+1}$ for both $i=0, 1$, and $l_0p_0$ and $l_1p_1$ are also relatively prime. Set $\tp_i=l_ip_i$ for $i=0, 1$, and $k=l_0l_1$. Recall that the connecting map $\varphi$ from $I(p_0, p_1)$ to $I(\tp_0, \tp_1)$ in the proof of \cite[Proposition 2.5]{JS} is constructed from continuous functions $\xi_j\in C([0, 1])$, $j=1,2,..., k$ (called a section of eigenvalues) defined by 
 \[\xi_j(t)= \begin{cases}
t/2,\quad \quad &1\leq j\leq r_0,  \\ 
1/2, \quad  \quad &r_0<j \leq k-r_1,\\
(t+1)/2, \quad \quad &k-r_1< j\leq k,
\end{cases}\]
where $r_0$ and $r_1$ are natural numbers such that $r_i<\tp_{i+1}$ and $\tp_{i+1}|k-r_i$ for $i=0$, $1$. Set $\xi(f)=\diag(f\circ\xi_1, f\circ\xi_2, ..., f\circ\xi_k)\in C([0, 1])\otimes M_{d}\otimes M_k$ for $f\in I(p_0, p_1)$. By the choice of $r_0$ and $r_1$, we obtain a unitary $u\in C([0, 1])\otimes M_d\otimes M_k$ such that $\Ad u\circ\xi(f)\in I(\tp_0, \tp_1)$ for any $f\in I(p_0, p_1)$. The unital embedding $\varphi$ was defined by $\varphi=\Ad u\circ\xi$. Since the definition of $I(\tp_0, \tp_1)$ is independent of the unital embeddings of $M_{\tp_i}$, $i=0, 1$ into $M_d\otimes M_k$, we may regard $\varphi=\xi$.

Assume that there exists a conditional expectation $E_{\varphi}$ for $\varphi$.
Let $g_i$, $i=0, 1$ be positive continuous functions on $[0, 1]$ such that $g_i(i)=1$ for both $i=0, 1$, $g_0|_{[1/2, 1]}=0$, and $g_1|_{[0, 1/2]}=0$. Define a positive element $F$ in $I(\tp_0, \tp_1)$ by 
\[ F(t) =(1-t)\varphi(g_0\otimes 1_d)(0) +t (1_{dk}-\varphi(g_1\otimes 1_d)(1)),\]
for $t\in [0, 1]$. For any $h\in C([0, 1])$ with $h|_{[1/2, 1]}=0$, we have 
\[E_{\varphi}(F)(h\otimes 1_d)=E_{\varphi}(F\,\varphi(h\otimes 1_d))=E_{\varphi}(\varphi(h\otimes 1_d))=h\otimes 1_d.\]
On the other hand, for any $h\in C([0, 1])$ with $h|_{[0, 1/2]}=0$, we have 
\[E_{\varphi}(F)(h\otimes 1_d)=E_{\varphi}(F\, \varphi(h\otimes 1_d))=E_{\varphi}(0)=0,\]
which contradicts the continuity of $E_{\varphi}(F)$ at $1/2$.

\noindent {\rm (ii)}  This argument is a variant of the proofs of \cite[Theorem 4.1]{JS} and \cite[Theorem 2.1]{Ror}. In order to obtain a conditional expectation, we shall insert in addition the following section $\xi_e$, and arrange the amplifications suitably.

Replacing $\varepsilon$ with $\varepsilon/\max\{\, \| f\|\, :\, f\in F\}$, we may assume that $F$ is a set of contractions in $I(p_0, p_1)$.
For a Lipschitz continuous function $f\in I(p_0, p_1)$, we denote by $\Lip(f)$ the Lipschitz constant of $f$. Since the set of all Lipschitz continuous functions is dense in $I(p_0, p_1)$, without loss of generality we may assume that $\Lip (f)<\infty$ for all $f\in F$. Let $\tN\in\N$ be such that $\displaystyle \max\{\Lip(f), 1\}< \tN \varepsilon/ 8$, and 
\[\left\lvert\int_{[0, 1]} \tr_d(f(t))\ d\mu(t) -\frac{1}{k}\sum_{j=1}^k \tr_d(f(j/k))\right\rvert< \varepsilon/2,\]
for all $f\in F$ and $k\geq \tN$.  Set $N=(\tN d)^2\in \N$.

Let $\iota_i$, $i=0, 1$ be the unital embeddings of $M_{p_i}$ into $M_d\cong M_{p_0}\otimes M_{p_1}$ which determine the prime dimension drop algebra $I(p_0, p_1)$. We suppose that $\tp_0$ and $\tp_1$ are relatively prime natural numbers such that $\tp_0$, $\tp_1>N$, and let $l_0$ and $l_1\in\N$ be such that $l_ip_i=\tp_i$ for both $i=0, 1$. Since the same argument works for the case $\tp_1>\tp_0$, in what follows we assume that $\tp_0> \tp_1$. Setting $k=l_0l_1$, we will prepare some natural numbers $y_j$ and $y_{j+1/2}$ inducing a partition of $k$ below. By the choice of $N$, there exist natural numbers $\bN_i \geq \tN$, $i=0, 1$ and $r_i$, $i=0, 1$ such that $l_i=(\bN_i-1)p_{i+1}+r_i$, and $0\leq r_i < p_{i+1}$ for both $i=0, 1$. Since $\tp_0=l_0p_0$ and $\tp_1=l_1p_1$ are relatively prime, it follows that $r_i>0$ for $i=0, 1$. We set $\bN=\bN_1$, and note that 
\[ k=l_0l_1=(\bN-1)\tp_0+l_0r_1=(\bN_0-1)\tp_1 +l_1r_0.\]
By $\tp_0>\tp_1$,
we have natural numbers $y_0$, $y_1$,..., $y_{\bN-1}$, $y_{1/2}$, $y_{3/2}$,..., $y_{\bN-3/2}$, and $c_1$, $c_2$,..., $c_{\bN-1}$ such that $y_j+y_{j+1/2}=\tp_0$ for $j=0, 1,..., \bN-2$, $y_{\bN-1} =l_0r_1$, $y_0=l_1r_0$, and $y_{j-1/2} +y_j=c_j\tp_1$ for $j=1,2, ..., \bN-1$. 
Note that 
\[\sum_{j=0}^{\bN-1} y_j+\sum_{j=1}^{\bN-1}y_{j-1/2} =(\bN-1)\tp_0+l_0 r_1=\left(\sum_{j=1}^{\bN-1} c_j\right)\tp_1 +l_1r_0=k.\]

The required unital embedding $\varphi$ of $I(p_0, p_1)$ into $I(\tp_0, \tp_1)$ can be constructed as the block diagonal map $\diag(f\circ\omega_1, f\circ\omega_2,..., f\circ\omega_k)$ for $f\in I(p_0, p_1)$ by setting the following continuous functions $\omega_j\in C([0, 1])$, $j=1,2,...,k$. For $j=0,1,..., \bN-1$, we define continuous functions $\xi_j$ and $\xi_{j+1/2}\in C([0, 1])$ by 
\[ \xi_j(t) = (t+j)/\bN,\quad \xi_{j+1/2} (t) = (j+1)/\bN\quad \text{for }t\in [0, 1],\]
 and $\xi_e=\id_{[0, 1]}\in C([0, 1])$. Set $z_{-1/2}=0$, and inductively $z_j= z_{j-1/2} + y_j$, and $z_{j+1/2}=z_j+y_{j+1/2}$ for $j=0, 1, ...\bN-1$.
We define $\omega_1=\xi_e$,  
\begin{align*} 
\omega_m&=\xi_j \quad \ \ \text{   for } j=0, 1,..., \bN-1, \text{ and } m\in \N \text{ with } z_{j-1/2}+2 \leq m \leq z_j,  \\
 \omega_m&=\xi_{j+1/2} \text{ for  }j=0, 1,..., \bN-2, \text{ and }  m\in \N \text{ with } z_j +1\leq m \leq z_{j+1/2} +1.
 \end{align*}
 As shown in the next paragraph, there exist  unital embeddings $\tiota_i$ of $M_{\tp_i}$ into $M_d\otimes M_k$, $i= 0, 1$ such that $\varphi(f)(i)\in \Image(\tiota_i)$ for any $f\in I(p_0, p_1)$ and $i=0, 1$. Thus we may regard $\varphi$ as a unital embedding of $I(p_0, p_1)$ into $I(\tp_0, \tp_1)$. 
 
 Let $g_j$, $j=0, 1, ..., \bN$ be positive continuous functions on $[0, 1]$ such that $g_j(j/\bN)=1$ and $\supp(g_j)\subset [(2j-1)/2\bN, (2j+1)/2\bN]\cap[0, 1]$.
 For $i=0, 1$ and $j=i, 1+i, 2+i, ..., \bN-1+i$, we let $h_j^{(i)}$ be the projection in $M_k$ such that $1_d\otimes h_j^{(i)}=\varphi(g_j\otimes 1_d)(i)$. Note that \[\rank(h_0^{(0)})=1+(y_0-1)=l_1r_0,\quad \rank(h_j^{(0)})= (y_{j-1/2}+1)+(y_j-1)=c_j\tp_1,\] for $j=1,2,...,\bN-1$.  Then there exist  unital embeddings $\iota_0^{(0)}$ of $M_{r_0}$ into $1_d\otimes h_0^{(0)}M_kh_0^{(0)}$ and $\iota_{0, j}$ of $M_{c_jp_1}$ into $(\Image(\iota_0)'\cap M_d)\otimes h_j^{(0)} M_k h_j^{(0)}$, $j=1,2,..., \bN-1$ such that 
 \[ \iota_0^{(0)}(e_{1, 1}^{(r_0)})\, 1_d\otimes e_{1, 1}^{(k)}= 1_d\otimes e_{1, 1}^{(k)},\quad \iota_{0, j}(M_{p_1}\otimes 1_{c_j})=(\Image(\iota_0)'\cap M_d)\otimes h_j^{(0)}.\]
 In the matrix algebra $(\Image(\iota_0)'\cap M_d)\otimes M_k\cong M_{p_1k}$, it follows that 
 \[\rank(\iota_0^{(0)}(e_{1,1}^{(r_0)}))=(l_1p_1r_0)/r_0=\tp_1,\quad \rank(\iota_{0, j}(e_{1, 1}^{(c_jp_1)}))=c_j\tp_1/c_j=\tp_1,\]
 for $j=1, 2, ..., \bN-1$. By $\displaystyle r_0+\left(\sum_{j=1}^{\bN-1}c_j\right)p_1=l_0$, we can find a unital embedding $\biota_0$ of $M_{l_0}$ into 
 $(\Image(\iota_0)'\cap M_d)\otimes M_k$ such that $\Image(\iota_0^{(0)})\cup \bigcup_{j=1}^{\bN-1}\Image(\iota_{0, j})\subset \Image(\biota_0)$ and 
 \[ \biota_0(e_{1, 1}^{(l_0)})\, (1_d\otimes e_{1, 1}^{(k)}) = \iota_0^{(0)}(e_{1,1}^{(r_0)})(1_d\otimes e_{1, 1}^{(k)}) = 1_d\otimes e_{1, 1}^{(k)}.\]
 In a similar way, we see that $\rank(h_j^{(1)})=(y_{j-1}-1)+(y_{j-1/2}+1)=\tp_0$ for $j=1, ..., \bN-1$, and $\rank(h_{\bN}^{(1)})=(y_{\bN-1}-1)+1=l_0r_1$. Recall that the position of $\omega_1=\xi_e$ in the definition of $\varphi$ is $1_d\otimes e_{1, 1}^{(k)}$, which implies that $ e_{1, 1}^{(k)}\, h_{\bN}^{(1)}=e_{1, 1}^{(k)}$. Then there exist unital embeddings $\iota_1^{(1)}$ of $M_{r_1}$ into $1_d\otimes h_{\bN}^{(1)}M_kh_{\bN}^{(1)}$ and $\iota_{1, j}$ of $M_{p_0}$ into $(\Image(\iota_1)'\cap M_d)\otimes h_j^{(1)}$, $j=1,2,...,\bN-1$ such that 
 \[\iota_1^{(1)}(e_{1, 1}^{(r_1)})(1_d\otimes e_{1, 1}^{(k)})=1_d\otimes e_{1, 1}^{(k)},\quad \rank(\iota_{1, j}(e_{1, 1}^{(p_0)}))=\rank(\iota_1^{(1)}(e_{1, 1}^{(r_1)}))=\tp_0\]
 in $(\Image(\iota_1)'\cap M_d)\otimes M_k$ for $j=1, 2,..., \bN-1$.
By $r_1+(\bN-1)p_0=l_1$, we can also find a unital embedding $\biota_1$ of $M_{l_1}$ into $(\Image(\iota_1)'\cap M_d)\otimes M_k$ such that $\Image(\iota_1^{(1)})\cup\bigcup_{j=1}^{\bN-1}\Image(\iota_{1, j})\subset \Image(\biota_1)$ and 
\[ \biota_1(e_{1, 1}^{(l_1)})(1_d\otimes e_{1, 1}^{(k)})=\iota_1^{(1)}(e_{1, 1}^{(r_1)})(1_d\otimes e_{1, 1}^{(k)}) =1_d\otimes e_{1, 1}^{(k)}.\]
For each $i=0, 1$, define a unital embedding $\tiota_i$ of $M_{\tp_i}$ into $M_{dk}\cong M_{\tp_0}\otimes M_{\tp_1}$ by 
\[\tiota_i(a\otimes b) =(\iota_i(a)\otimes 1_k)\, \biota_i(b)\]
for  $a\in M_{p_i}$ and $b\in M_{l_i}$. Set $p_j^{(0)}=c_jp_1$ for $j=1, 2, ...,\bN-1$ and $p_j^{(1)}=p_0$. Because of 
\[ (\Image(\iota_i)\otimes 1_k)((\Image(\iota_i)'\cap M_d)\otimes 1_k)\, \iota_{i, j}(1_{p_j^{(i)}})\subset\Image(\tiota_i),\]
for all $i=0, 1$ and $j=1, 2, ..., \bN-1$, thus we conclude that 
\[\varphi(f)(i)=(f(i)\otimes 1_k)\, \iota_i^{(i)}(1_{r_i})+\sum_{j=1}^{\bN-1}(f(j/\bN)\otimes 1_k)\, \iota_{i, j}(1_{p_j^{(i)}})\in \Image(\tiota_i),\]
for any $i=0, 1$ and $f\in I(p_0, p_1)$. 

Next, we show that $\varphi$ is $(F, \varepsilon)$-standard. For all $f\in F$, from $\Lip(f)<\varepsilon\bN/8$ it follows that 
\[\max_{t\in [0, 1]}\left\lVert f\circ \omega_m(t) - f(j/\bN)\right\rVert <\varepsilon /8,\]
for all $j=1,2,..., \bN-1$ and $m\in \N\setminus\{1\}$ with $(j-1)\tp_0+1\leq m\leq j\tp_0$. For any tracial state $\tau $ of $I(\tp_0, \tp_1)$, one obtain a probability measure $\mu_{\tau}$ on $[0, 1]$ such that
\[\tau(g)=\int_{[0, 1]}\tr_{dk}(g(t))\ d\mu_{\tau}(t), \]
for any $g\in I(\tp_0, \tp_1)$ (see \cite[Lemma 2.4]{JS} for example). Because of $\tp_i>(\tN d)^2$ and 
$(l_0r_1+\tp_0)/k< 2p_0/l_1 < \varepsilon/8$, it follows that  for any tracial state $\tau$ of $I(\tp_0, \tp_1)$ and $f\in F$, 
\begin{align*}
\tau\circ\varphi(f) &=\int_{[0, 1]} \tr_d\otimes \tr_k(\varphi(f)(t))\ d\mu_{\tau}(t) \\
&\approx_{\frac{\varepsilon}{8}}\frac{1}{k}\sum_{m=2}^{k-l_0r_1}\int_{[0, 1]}\tr_d(f\circ\omega_m(t))\ d\mu_{\tau}(t) \\
&\approx_{\frac{\varepsilon}{4}}\frac{\tp_0}{k}\sum_{j=1}^{\bN}\tr_d(f(j/\bN))\approx_{\frac{\varepsilon}{8}}\frac{1}{\bN}\sum_{j=1}^{\bN}\tr_d(f(j/\bN)) \\
&\approx_{\frac{\varepsilon}{2}}\int_{[0, 1]}\tr_d(f(t))\ d\mu(t).
\end{align*}

Finally, it remains to show that there exists a conditional expectation for $\varphi$. 
We define a unital completely positive map $E_{\varphi}$ from $I(\tp_0, \tp_1)$ into $C([0, 1])\otimes M_d$ by
\[E_{\varphi}=k(\id_{C([0, 1])\otimes M_d}\otimes\, \tr_k)\circ\Ad(1_{C([0, 1])\otimes M_d}\otimes e_{1, 1}^{(k)}).\]
It is straightforward to check that $E_{\varphi}\circ\varphi=\id_{I(p_0, p_1)}$. In the definition of  $\biota_i$, we have seen that $(1_d\otimes e_{1, 1}^{(k)})\biota_i(e_{1, 1}^{(l_i)})=1_d\otimes e_{1, 1}^{(k)}$ for both $i=0, 1$. Then for any $g\in I(\tp_0, \tp_1)$, it follows that $\Ad(1_d\otimes e_{1, 1}^{(k)})(g(i))\in \Image(\iota_i)\otimes e_{1, 1}^{(k)}$, which implies that 
$E_{\varphi}(g)(i)\in \Image(\iota_i)$ for both $i=0, 1$. Hence we may regard $E_{\varphi}$ as a conditional expectation for $\varphi$
 \end{proof}

 \section{Expectationless embeddings}\label{Sec3}
 The construction of the endomorphism in Theorem \ref{Thm1.1} will be derived from the basic observation of a continuous path of two embeddings below.
 
Let $A$, $B$, and $B_i$, $i=0, 1$ be unital \Cs s. For $i=0, 1$, we let $\iota_i$  be a unital embedding of $B_i$ into $B$ and $\psi_i$ a unital embedding of $A$ into $\Image(\iota_i)$. Assume that $\psi_0$, and $\psi_1$ are asymptotically unitarily equivalent by a continuous path $u_t$, $t\in[0, 1)$ of unitaries in $B$ with $u_0=1_B$ 
(that is $\lim_{t\to 1}\Ad u_t\circ\psi_0(a)=\psi_1(a)$ in the norm topology of $B$ for any $a\in A$).
We define a generalized dimension drop algebra $I(B_0, B_1)$ as the \Css{} of $C([0, 1])\otimes B$ consisting of all functions $f$ such that $f(i)\in \Image(\iota_i)$ for both $i=0, 1$. Thus we obtain a unital embedding $\psi$ of $A$ into $I(B_0, B_1)$ defined by 
  \[\psi(a)\, (t)= \begin{cases}
\Ad u_t\circ\psi_0(a),\quad\ \quad &t\in[0, 1),  \\ 
\psi_1(a),  \quad &t=1,\quad \text{ for } a\in A.
\end{cases}\]
\begin{lemma}\label{Lem3.1}
In the setting above, additionally suppose that $A$ is simple. For each $i=0, 1$, if $\psi_i$ does not have a conditional expectation, then neither does $\psi$.
\end{lemma}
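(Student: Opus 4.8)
The plan is to prove the contrapositive: assuming that $\psi$ admits a conditional expectation $E\colon I(B_0,B_1)\to A$, I will produce a conditional expectation for at least one of $\psi_0,\psi_1$; since the hypothesis is that neither has one, this contradiction yields the claim. The single place where simplicity of $A$ enters is a normalization step I would carry out first. For every $h\in C([0,1])$ the function $h\otimes 1_B$ lies in $I(B_0,B_1)$ (its endpoint values $h(i)1_B$ sit in $\Image(\iota_i)$ because each $\iota_i$ is unital) and is central in $C([0,1])\otimes B$. Applying the bimodule identity $E(\psi(a)\,f\,\psi(c))=a\,E(f)\,c$ with $f=h\otimes 1_B$ gives $E(h\otimes 1_B)\,a=E((h\otimes 1_B)\psi(a))=E(\psi(a)(h\otimes 1_B))=a\,E(h\otimes 1_B)$ for all $a\in A$, the middle equality holding because $h\otimes 1_B$ is central. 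Hence $E(h\otimes 1_B)\in Z(A)=\mathbb{C}1_A$, and writing $E(h\otimes 1_B)=\nu(h)1_A$ defines a Borel probability measure $\nu$ on $[0,1]$ together with the key formula $E((h\otimes 1_B)\psi(a))=\nu(h)\,a$.

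Next I would split according to the mass $\nu(\{1\})$. If $\nu(\{1\})>0$ I build a conditional expectation for $\psi_1$: choose $\chi_n\in C([0,1])$ with $0\le\chi_n\le1$, $\chi_n(0)=0$, $\chi_n(1)=1$ and $\chi_n\searrow \mathbf{1}_{\{1\}}$, so that $\chi_n\otimes b\in I(B_0,B_1)$ for every $b\in\Image(\iota_1)$, and set $E_1(b)=\nu(\{1\})^{-1}\lim_n E(\chi_n\otimes b)$. Using $E((\chi_n\otimes 1_B)\psi(a))=\nu(\chi_n)a\to\nu(\{1\})a$ together with the estimate $\|\chi_n\otimes\psi_1(a)-(\chi_n\otimes 1_B)\psi(a)\|\le\sup_{t\in\supp\chi_n}\|\Ad u_t\circ\psi_0(a)-\psi_1(a)\|\to 0$ — which is exactly where asymptotic unitary equivalence is used — one gets $E_1\circ\psi_1=\id_A$. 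If instead $\nu(\{1\})=0$ I build a conditional expectation for $\psi_0$: choose $g_n\in C([0,1])$ with $0\le g_n\le 1$, $g_n(0)=1$, $g_n(1)=0$ and $g_n\nearrow\mathbf{1}_{[0,1)}$, and set $\Psi_n(b)(t)=g_n(t)\,\Ad u_t(b)$ for $b\in\Image(\iota_0)$. The vanishing of $g_n$ at $1$ makes each $\Psi_n(b)$ a continuous element of $I(B_0,B_1)$ and $\Psi_n$ completely positive; since $\Ad u_t\circ\psi_0(a)=\psi(a)(t)$ identically on $[0,1)$, one has the exact identity $\Psi_n(\psi_0(a))=(g_n\otimes 1_B)\psi(a)$, so $E(\Psi_n(\psi_0(a)))=\nu(g_n)a\to a$ and $E_0(b)=\lim_n E(\Psi_n(b))$ satisfies $E_0\circ\psi_0=\id_A$.

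The main obstacle, and the step I would treat most carefully, is showing that these limits exist \emph{in norm}, so that the maps land in $A$ rather than merely in $A^{**}$. I expect to handle this by monotonicity combined with the scalar domination supplied by the first step: for $b\ge 0$ and $m\le n$ one has $0\le E((\chi_m-\chi_n)\otimes b)\le\|b\|\,(\nu(\chi_m)-\nu(\chi_n))1_A$ in the $\psi_1$ case (and the analogous bound with $g_n$ in the $\psi_0$ case), whence $\|E(\chi_m\otimes b)-E(\chi_n\otimes b)\|\le\|b\|\,(\nu(\chi_m)-\nu(\chi_n))$. Since $\nu(\chi_n)\to\nu(\{1\})$ and $\nu(g_n)\to\nu([0,1))=1$ are Cauchy scalar sequences, the operator sequences are norm-Cauchy, first for positive $b$ and then for all $b$ by decomposition, so the limits lie in $A$. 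I would finish with the routine verifications that $E_1$, resp.\ $E_0$, is unital and completely positive, hence a conditional expectation. It is worth emphasizing that simplicity of $A$ is used precisely to turn $E$ on central functions into a genuine scalar measure, which is what makes both the formula $E((h\otimes 1_B)\psi(a))=\nu(h)a$ and the scalar-domination estimates available.
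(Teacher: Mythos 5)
Your proof is correct and follows essentially the same route as the paper's: assuming a conditional expectation $E$ for $\psi$, simplicity of $A$ turns the restriction of $E$ to $C([0,1])\otimes 1_B$ into a probability measure, and a dichotomy on that measure yields a conditional expectation either for $\psi_0$ (by weighting $b\mapsto \Ad u_t(b)$ with a cutoff vanishing at $t=1$) or for $\psi_1$ (by functions concentrating at $t=1$, with norm convergence obtained from exactly your scalar-domination Cauchy argument). The only organizational difference is the dichotomy itself: the paper splits on $\mu_E(z)>0$ versus $\mu_E(z)=0$ for the single function $z(t)=1-t$, so its $\psi_0$ case is the one-shot formula $E_0(b)=\mu_E(z)^{-1}E(\tilde{b})$ with $\tilde{b}(t)=z(t)\,\Ad u_t(\iota_0(b))$ requiring no limit, and its $\psi_1$ case (where then $\mu_E=\delta_1$) uses the powers $(1-z)^n=t^n$ in place of your $\chi_n$ with no normalization needed, whereas your split on $\nu(\{1\})$ is equally valid but makes both cases limit arguments.
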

\begin{proof}
Assume that there exists a conditional expectation $E$ for $\psi$. For $f\in C([0, 1])$, it follows that $E(f\otimes 1_B)\in A'\cap A=\C1_A$. Regarding $E$
 as a state of $C([0, 1])$, we obtain the probability measure $\mu_{E}$ on $[0, 1]$ which is determined by 
 \[\int_{[0, 1]}f(t)\ d\mu_E(t)\cdot 1_A= E(f\otimes 1_B)\quad\text{ for } f\in C([0, 1]).\]
 Denote by $z$ the positive continuous function on $[0, 1]$ defined by $z(t)=1-t$ for $t\in[0, 1]$.  For $b\in B_0$, we set $\tb(t)=z(t)\Ad u_t\circ\iota_0(b)\in B$, $t\in [0, 1)$ and $\tb(1)=0$. Then it follows that $\tb\in I(B_0, B_1)$. 
 
 In the case $\mu_E(z)=E(z\otimes 1_B)>0$, we can define a linear map $E_0$ from $B_0$ to $A$ by 
 \[E_0(b)=\mu_E(z)^{-1}E(\tb)\quad\text{ for } b\in B_0.\]
 It is straightforward to show that $E_0\circ\iota_0^{-1} : \Image(\iota_0)\rightarrow A$ is a conditional expectation for $\psi_0$, which is a contradiction. 
 
 Assume that $\mu_E(z)=0$. Since $E((1-z)^n\otimes \iota_1(b))\in A$, $n\in\N$ is a Cauchy sequence in the norm topology for any $b\in B_1$, we obtain a completely positive map $E_1$ from $B_1$ to $A$ defined by 
 \[E_1(b)=\lim_{n\to\infty}E((1-z)^n\otimes \iota_1(b))\quad\text{ for }b\in B_1.\]
 By $\mu_E(z)=0$, the map $E_1\circ\iota_1^{-1} : \Image(\iota_1)\rightarrow A$ is a conditional expectation for $\psi_1$, which contradicts the assumption. 
\end{proof}
 
 By applying the above lemma, in order to construct the required endomorphism of \Js{} in Theorem \ref{Thm1.1}, it suffices to find two unital expectationless embeddings of \Js{} into relatively prime UHF-algebras. 
 The following construction of unital embeddings can be regarded as a suitable modification of \cite[Proposition 2.3]{Sat} with conditional expectations.
 \begin{proposition}\label{Prop3.2}
 There exist two UHF-algebras $B_i$, $i=0, 1$ and unital embeddings $\iota_i$ of \Js{} into $B_i$, $i=0, 1$ such that the supernatural numbers of $B_0$ and $B_1$ are relatively prime and $\iota_i$ does not have a conditional expectation for both $i=0, 1$.
 \end{proposition}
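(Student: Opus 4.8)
The plan is to realize each $\iota_i$ as a limit of unital $*$-homomorphisms from prime dimension drop algebras into matrix algebras, organized by an Elliott-type approximate intertwining, and to destroy the conditional expectation by the same endpoint mechanism that appears in Proposition \ref{Prop2.2} (i). First I would fix a standard inductive decomposition $\cZ = \varinjlim (I(a_n, b_n), \varphi_n)$ into prime dimension drop algebras with $\gcd(a_n, b_n) = 1$, $a_n b_n \to \infty$, and with standard connecting maps $\varphi_n$ (which may be taken with conditional expectations as in Proposition \ref{Prop2.2} (ii), since their only role is to guarantee that the limit is $\cZ$ carrying its Lebesgue trace). For the two targets I would set $B_0 = M_{2^\infty}$ and $B_1 = M_{3^\infty}$, written as $B_i = \varinjlim M_{m_{i, n}}$ with $m_{0, n}$ a power of $2$ and $m_{1, n}$ a power of $3$; these have relatively prime supernatural numbers automatically.

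The vertical maps $\lambda_{i, n} \colon I(a_n, b_n) \to M_{m_{i, n}}$ use point evaluations. A unital $*$-homomorphism $I(a, b) \to M_m$ exists exactly when $m$ lies in the numerical semigroup generated by $a$, $b$, $ab$ (interior evaluations contribute $ab$ and the two endpoint representations contribute $a$ and $b$), and since $\gcd(a_n, b_n) = 1$ makes all sufficiently large $m$ admissible — in particular large powers of $2$ and of $3$ — I can build such $\lambda_{i, n}$ for a rapidly growing sequence of matrix sizes. Choosing the evaluation points to approximate the Lebesgue measure and to match $\varphi_n$ up to a summable error, a standard approximate-intertwining argument produces a unital embedding $\iota_i = \varinjlim \lambda_{i, n} \colon \cZ \to B_i$; injectivity is automatic because $\cZ$ is simple. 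The essential design choice, modifying the expectation-carrying construction of \cite[Proposition 2.3]{Sat}, is to forbid the trivial direction: whereas the expectation-carrying embedding is conjugate to $z \mapsto z \otimes 1 \in \cZ \otimes B_i \cong B_i$, whose corner $\otimes 1$ is precisely the analogue of the section $\xi_e$ that furnishes $E_{\varphi}$ in Proposition \ref{Prop2.2} (ii), here I would instead route the eigenvalue sections across the interval in the crossing pattern of \cite[Proposition 2.5]{JS}, so that no block carries a full copy of the identity section.

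To rule out a conditional expectation I would argue by contradiction in the spirit of Proposition \ref{Prop2.2} (i) and Lemma \ref{Lem3.1}. Suppose $E \colon B_i \to \cZ$ satisfies $E \circ \iota_i = \id_{\cZ}$. Pulling the continuous coordinate of the blocks $I(a_n, b_n)$ through the intertwining yields, for each $n$, a commuting family on which $E$ is governed — by simplicity, $\cZ' \cap \cZ = \C 1$, exactly as $E(f \otimes 1_B) \in A' \cap A$ is used in the proof of Lemma \ref{Lem3.1} — by a single probability measure. Using the endpoint peaking functions $g_0$, $g_1$ and the crossing sections, I would construct a positive contraction $F \in B_i$ together with sequences of elements approaching a common point of $\cZ$ on which $E(F)$ is forced to take two different values, reproducing in the limit the discontinuity that obstructs $E_{\varphi}$ in Proposition \ref{Prop2.2} (i). Running this for $i = 0$ and $i = 1$ gives the two expectationless embeddings into relatively prime UHF algebras.

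The hard part will be this last step: the endpoint obstruction of Proposition \ref{Prop2.2} (i) is a genuine continuity statement on $[0, 1]$, whereas the UHF target $B_i$ has no continuous coordinate, so the contradiction must be recovered purely in the limit. This forces the bookkeeping in the approximate intertwining to be quantitative — the peaking functions, their supports near $0$ and $1$, and the multiplicities of the crossing sections must be propagated through the maps $\lambda_{i, n}$ with summable errors — so that the two incompatible values of $E(F)$ survive the passage to $\varinjlim$. Guaranteeing simultaneously that the limit is honestly $\cZ$ (not merely $\cZ$-absorbing), that the image sits in a UHF algebra of the prescribed prime type, and that the no-expectation obstruction is preserved is where essentially all the work lies.
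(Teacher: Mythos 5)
Your overall architecture (an inductive system of prime dimension drop algebras limiting onto $\mathcal{Z}$, mapped into a UHF limit, with the obstruction traced back to Proposition \ref{Prop2.2}) is the right family of ideas, but there is a genuine gap exactly where you flag one: the non-existence of a conditional expectation for the limit embedding is never established, and the plan you sketch for it cannot work as stated. Your finite-stage vertical maps $\lambda_{i,n}\colon I(a_n,b_n)\to M_{m_{i,n}}$ are finite sums of point evaluations, hence are not injective, so there is no finite-stage ``expectationless'' fact to propagate; the obstruction would have to be manufactured from scratch in the limit. But the endpoint argument of Proposition \ref{Prop2.2} (i) is irreducibly a continuity statement about functions on $[0,1]$, and $B_i$ has no such coordinate. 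Nor can a ``crossing pattern'' of eigenvalue sections by itself force the conclusion: unital embeddings of $\mathcal{Z}$ into UHF algebras \emph{with} conditional expectations do exist (for instance $z\mapsto z\otimes 1_{B_i}\in\mathcal{Z}\otimes B_i\cong B_i$ admits the slice map $\id_{\mathcal{Z}}\otimes\tau_{B_i}$), so the absence of an expectation is a delicate feature of the particular inductive construction, and your sketch (a positive contraction $F$ with ``two incompatible values'' of $E(F)$) supplies no mechanism to detect it once the interval is gone.

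The paper resolves precisely this difficulty by a device your proposal is missing. It never maps into matrix algebras at finite stages: the top row consists of $C_n=C([0,1])\otimes M_{d_n}$, with connecting maps $\tvarphi_n$ defined by the \emph{same} eigenvalue sections as $\varphi_n$, so that $\tvarphi_n|_{I_n}=\varphi_n$ and the diagram commutes exactly (no approximate intertwining is needed); unique trace together with \cite{Thoms} and \cite{Ell} identifies $\varinjlim(C_n,\tvarphi_n)$ with the UHF algebra $B_0$. The vertical maps are then the canonical inclusions $\eta_n\colon I_n\subset C_n$, which fail to have conditional expectations by the elementary endpoint-continuity argument---this is the finite-stage obstruction your setup lacks. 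Finally, the conditional expectations $E_n$ for the \emph{horizontal} maps $\varphi_n$ supplied by Proposition \ref{Prop2.2} (ii)---whose role you dismissed as merely guaranteeing that the limit is $\mathcal{Z}$---are the engine of the proof: composing them gives a conditional expectation $E$ for $\varphi_{\infty,1}\colon I_1\to\mathcal{Z}$, and a hypothetical expectation $E_{\iota_0}$ for $\iota_0$ would then make $E\circ E_{\iota_0}\circ\tvarphi_{\infty,1}$ a conditional expectation for $\eta_1$, a contradiction. In other words, the paper transports the hypothetical expectation on the limit \emph{back} to stage one, where non-existence is elementary, rather than trying, as you do, to push the finite-stage discontinuity \emph{forward} into the limit, where it has no meaning.
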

 \begin{proof}
 To simplify notations, we regard $i=0, 1$ as elements in $\Z/2\Z$. 
 We let $p_n^{(i)}$, $q_n^{(i)}$, $n\in\N$, $i=0, 1$ be sequences of mutually relatively prime natural numbers such that
 \[p_n^{(i)}|p_{n+1}^{(i)},\quad q_n^{(i)}|q_{n+1}^{(i)},\quad p_n^{(i+1)}|(p_{n+1}^{(i)}/p_n^{(i)}-1), \quad q_n^{(i+1)}|(q_{n+1}^{(i)}/q_n^{(i)}-1),\]
 for any $n\in\N$ and $i=0, 1$. For example, such natural numbers are obtained by $p_1^{(0)}=2$, $p_1^{(1)}=3$, $q_1^{(0)}=5$, $q_1^{(1)}=7$, 
 \begin{align*}
 p_{n+1}^{(0)}&=p_n^{(0)}(L_np_n^{(1)}q_n^{(0)}q_n^{(1)}+1),\quad p_{n+1}^{(1)}=p_n^{(1)}(L_np_{n+1}^{(0)}q_n^{(0)}q_n^{(1)}+1),\\
  q_{n+1}^{(0)}&=q_n^{(0)}(L_np_{n+1}^{(0)}p_{n+1}^{(1)}q_n^{(1)}+1),\quad q_{n+1}^{(1)}=q_n^{(1)}(L_np_{n+1}^{(0)}p_{n+1}^{(1)}q_{n+1}^{(0)}+1),
 \end{align*}
 for some natural numbers $L_n$, $n\in\N$ inductively.
 Let $\varepsilon_n>0$, $n\in\N$ be a decreasing sequence which converges to $0$. Set $I_n=I(p_n^{(0)}, p_n^{(1)})$ and $J_n=I(q_n^{(0)}, q_n^{(1)})$, 
 $n\in\N$. Taking large natural numbers $L_n$, $n\in\N$ and applying Proposition \ref{Prop2.2} {\rm (ii)} inductively, for each $n\in\N$ we can obtain  increasing sequences $F_{n, m}$ and $G_{n, m}$, $m\in\N$ of finite subsets in $I_n$ and $J_n$, and  unital embeddings $\varphi_n :I_n\rightarrow I_{n+1}$ and $\psi_n : J_n\rightarrow J_{n+1}$ satisfying the following properties:
 \begin{itemize} 
 \item[\rm (i)]
 $\bigcup_{m=1}^{\infty} F_{n, m}$ and $\bigcup_{l=m}^{\infty} G_{n, m}$ are dense in the unit ball of $I_n$ and $J_n$, 
 \item[\rm(ii)]$\bigcup_{l, m=1}^{n} \varphi_{n, m}(F_{m, l})\subset F_{n+1, n+1}$, \quad $\bigcup_{l, m=1}^n \psi_{n, m}(G_{m, l})\subset G_{n+1, n+1}$, \\
 where $\varphi_{n, m}$ and $\psi_{n, m}$, $n>m$, denote the composed connecting maps defined by
\[ \varphi_{n, m}=\varphi_n\circ\varphi_{n-1}\circ\cdots\circ\varphi_m,\quad
\psi_{n, m}=\psi_n\circ\psi_{n-1}\circ\cdots\circ\psi_m,\]

\item[\rm (iii)]
$\varphi_n$ is $(F_{n, n}, \varepsilon_n)$-standard, $\psi_n$ is $(G_{n, n}, \varepsilon_n)$-standard, 
\item[\rm (iv)]
there exist conditional expectations for $\varphi_n$ and $\psi_n$.
\end{itemize}

Let $B_0$ and $B_1$ are UHF-algebras whose supernatural numbers are 
\[p_1^{(0)}p_1^{(1)}\prod_{n\in\N} \frac{p_{n+1}^{(0)}p_{n+1}^{(1)}}{p_n^{(0)}p_n^{(1)}}\quad\text{ and }\quad q_1^{(0)}q_1^{(1)}\prod_{n\in\N} \frac{q_{n+1}^{(0)}q_{n+1}^{(1)}}{q_n^{(0)}q_n^{(1)}}.\]
We shall construct the required embeddings $\iota_i$ just for $i=0$. By replacing $p_n^{(i)}$, $I_n$, $F_{n, m}$, and $\varphi_n$ with $q_n^{(i)}$, $J_n$, $G_{n, m}$, and $\psi_n$, the same argument shows the existence of $\iota_1$.

To simplify our notations, we set $l_n^{(i)}=p_{n+1}^{(i)}/p_n^{(i)}$, $i=0, 1$, $m_n=l_n^{(0)}l_n^{(1)}$, $d_n=p_n^{(0)}p_n^{(1)}$, and $C_n=C([0, 1])\otimes M_{d_n}$ for $n\in\N$. The construction of $\varphi_n$ in the proof of Proposition \ref{Prop2.2} \rm{(ii)} is determined by the section of eigenvalues $\omega_j\in C([0, 1])$, $j=1,2,...,m_n$. By using the same $\omega_j$, we define unital embeddings $\tvarphi_n $ of $C_n$ into $C_{n+1}$ by 
\[\tvarphi_n(f)=\diag(f\circ\omega_1, f\circ\omega_2, ..., f\circ\omega_{m_n})\quad\text{ for } f\in C_n.\]
It is clear that $\tvarphi_n|_{I_n}=\varphi_n$ for any $n\in\N$.

We denote by $\displaystyle\lim_{\longrightarrow}(I_n,\varphi_n)$ the inductive limit \Cs{} determined by $I_n$ and $\varphi_n$, $n\in\N$. For $n>m$, set $\varphi_{n,m}=\varphi_n\circ\varphi_{n-1}\circ\cdots\circ\varphi_m$ and let $\varphi_{\infty, n}$ be the induced map of $\{\varphi_n\}_{n\in\N}$ from $I_n$ to $\displaystyle\lim_{\longrightarrow}(I_n,\varphi_n)$. Since $\varphi_n$ is $(F_{n, n}, \varepsilon_n)$-standard for any $n\in\N$, it follows that the inductive limit \Cs{} $\displaystyle\lim_{\longrightarrow}(I_n,\varphi_n)$ has a unique tracial state. Actually, if $\tau$ and $\sigma$ are two tracial states of $\displaystyle\lim_{\longrightarrow}(I_n,\varphi_n)$, then it follows that 
\begin{align*}
\tau\circ\varphi_{\infty, n}(f)&=\tau\circ\varphi_{\infty, m+1}\circ\varphi_{m, n}(f)\approx_{\varepsilon_m}\int_{[0, 1]}\tr_{d_m}(\varphi_{m-1, n}(f)(t))\ d\mu(t)\\
&\approx_{\varepsilon_m}\sigma\circ\varphi_{\infty, m+1}\circ\varphi_{m, n}(f)=\sigma\circ\varphi_{\infty, n}(f),\quad \text{for }f\in F_{n, n}.
\end{align*}
Since $\varepsilon_m$, $m\in\N$ converges to $0$, we have $\tau\circ\varphi_{\infty, n}(f)=\sigma\circ\varphi_{\infty, n}(f)$ for all $f\in F_{n, n}$. From the definition of $F_{n, n}$, it is not so hard to see that $\bigcup_{n=1}^{\infty}\varphi_{\infty, n}(F_{n, n})$ is dense in the unit ball of $\displaystyle\lim_{\longrightarrow}(I_n, \varphi_n)$, which implies that $\tau=\sigma$. By the construction of $\varphi_n$, the simplicity of $\displaystyle\lim_{\longrightarrow}(I_n,\varphi_n)$ follows from a standard argument using \cite[Lemma III.4.1]{Dav}. Then the classification theory of \cite[Theorem 6.2]{JS} allows us to assert that $\displaystyle\lim_{\longrightarrow}(I_n,\varphi_n)$ is isomorphic to \Js.

For the same reason, we also see that the inductive limit \Cs{} $\displaystyle\lim_{\longrightarrow}(C_n, \tvarphi_n)$ has a unique tracial state. Thus, because of \cite[Theorem 1.4]{Thoms}, it follows that 
$\displaystyle\lim_{\longrightarrow}(C_n, \tvarphi_n)$ is a UHF-algebra.  
Since it is well-known that $K_0(C_n)\cong K_0(M_{d_n})$ for any $n\in\N$ as  ordered abelian groups, then the classification theorem of \cite{Ell} allows us to assert that $\displaystyle\lim_{\longrightarrow}(C_n, \tvarphi_n)$ is isomorphic to $B_0$ (A slightly different argument is found in \cite[Proposition 2.3]{Sat}).

For $n\in\N$, let $\eta_n$ be the canonical embedding of $I_n$ into $C_n$.
One can check that $\eta_n$ does not have a conditional expectation for any $n\in\N$. Now the following diagram commutes:

 \begin{equation*}
\xymatrix{
C_1\ar[r]^-{\tvarphi_{1}}&C_2\ar[r]^-{\tvarphi_{2}} &C_3\ar[r]&\quad\cdots \quad\ar[r]&C_n\ar[r]^-{\tvarphi_{n}}&C_{n+1}\ar[r]^-{\tvarphi_{n+1}}&\quad \cdots\\
I_1\ar[u]^-{\eta_{1}}\ar[r]_{\varphi_{1}}&I_2\ar[u]^-{\eta_{2}}\ar[r]_{\varphi_{2}} &I_3\ar[u]^-{\eta_{3}}\ar[r]&\quad\cdots\quad\ar[r] &I_n\ar[u]^-{\eta_{n}}\ar[r]_{\varphi_{n}}&I_{n+1}\ar[u]^-{\eta_{n+1}}\ar[r]_{\varphi_{n+1}}&\quad \cdots.
}
\end{equation*}

We let $\iota_0$ be the induced map of $\eta_n$, $n\in\N$ which is determined by $\iota_0(\varphi_{\infty, n}(a))=\tvarphi_{\infty, n}\circ\eta_n(a)$ for $a\in I_n$. This $\iota_0$ is a unital embedding of \Js{} into $B_0$ which does not have a conditional expectation. Because, considering  conditional expectations $E_n$ for $\varphi_n$, $n\in\N$, we obtain a conditional expectation $E$ for $\varphi_{\infty, 1}$ defined by 
\[ E(\varphi_{\infty, n}(a))=E_1\circ E_2\circ\cdots\circ E_{n-1}(a)\quad\text{ for }n\in\N\text{ and } a\in I_n.\]
If there exists a conditional expectation $E_{\iota_0}$ for $\iota_0$,
then the composition map $E\circ E_{\iota_0}\circ\tvarphi_{\infty, 1}$ can be a conditional expectation for $\eta_1$, which is a contradiction. 
\end{proof}

\begin{proof}[Proof of Theorem 1.1]
\

\noindent {\rm (i)}
By Proposition \ref{Prop3.2}, we obtain two UHF-algebras $B_i$, $i=0, 1$
whose supernatural numbers are relatively prime, and unital embeddings 
$\bpsi_i$, $i=0, 1$ of \Js{} into $B_i$  without conditional expectations. Although one can show that $\bpsi_0$ and $\bpsi_1$ are asymptotically unitarily equivalent using \cite[Lemma 5.3]{LN}, we take an elementary approach for the sake of self-contained presentations. Considering the infinite tensor product $\bigotimes_{n\in\N} B_i$ instead of $B_i$ for both $i= 0, 1$, we may assume that $B_i$, $i=0, 1$ are UHF-algebras of infinite type. Let $\Phi_i$, $i=0, 1$ be isomorphisms from $B_i\otimes B_i$ onto $B_i$. We identify $B_0\otimes B_1$ with $B_1\otimes B_0$ without mentioning, and let $B=B_0\otimes B_1$.  For $i=0, 1$, we let $\iota_i$ be the canonical embedding of $B_i$ into $B$. Set unital embeddings $\tpsi_i$, $i=0, 1$ of $\mathcal{Z}\otimes B$ into $B$ by 
\[ \tpsi_i(a\otimes b_0\otimes b_1)=\Phi_i(\bpsi_i(a)\otimes b_i)\otimes b_{i+1}\quad \text{for }a\in \mathcal{Z},\ b_i\in B_i,\]
where we regard $i=0, 1$ as elements in $\Z/2\Z$. Since $\tpsi_0$ and $\tpsi_1$ are unital endomorphisms of the UHF-algebra $B\cong \mathcal{Z}\otimes B$, then they are asymptotically unitarily equivalent, see \cite[Theorem 3.1]{Bl} or \cite[Proposition 1.3.4]{Ror0}. We define unital embeddings $\psi_i$, $i=0, 1$ of $\mathcal{Z}$ into $\Image(\iota_i)$ by $\psi_i(a)=\tpsi_i(a\otimes 1_B)$ for $a\in \mathcal{Z}$. Then $\psi_0$ and $\psi_1$ are also asymptotically unitarily equivalent in $B$. Since $\bpsi_i$ does not have a conditional expectation, neither does $\psi_i$ for both $i=0, 1$. Since any unitary in $B$ is homotopic
 to $1_B$, these $\iota_i$ and $\psi_i$, $i=0, 1$ satisfy the assumptions for Lemma \ref{Lem3.1}. Then there exists a unital embedding $\psi$ of \Js{} into $I(B_0, B_1)$ 
which does not have a conditional expectation. By \cite[Proposition 3.3]{RW}, there exists a unital embedding $\iota$ of $I(B_0, B_1)$ into \Js. Therefore, the composition map $\varphi=\iota\circ \psi$ is a unital endomorphism of \Js{} which does not have a conditional expectation. Indeed, if there exists a conditional expectation $E_{\varphi}$ for $\varphi$, then  $E_{\psi}=E_{\varphi}\circ\iota$ is so for $\psi$.
\medskip

\noindent {\rm (ii)}
Let $\iota$ be a unital embedding of $B$ into $A$, and let $\varphi$ the endomorphism of \Js{} obtained in {\rm (i)}. Then the tensor product $\eta=\iota\otimes\varphi$ is a unital embedding of $B\cong B\otimes \mathcal{Z}$ into $A\cong A\otimes \mathcal{Z}$. If there exists a conditional expectation $E_{\eta}$ for $\eta$, then taking a state $\omega$ of $B$ we can define a unital completely positive map $E_{\varphi}$ from \Js{} into \Js{} by $E_{\varphi}(a)=(\omega\otimes \id_{\mathcal{Z}})\circ E_{\eta}(1_A\otimes a)$ for $a\in$ \Js. It is straightforward to check that $E_{\varphi}$ is a conditional expectation for $\varphi$, which contradicts the assumption of $\varphi$ in {\rm (i)}. 
 \end{proof}
 
 \begin{corollary}\label{Cor3.3}
 \begin{item}
\vspace{-15pt}
\item[\rm (i)] For a unital nuclear \Js-absorbing \Cs{}, there exists a unital endomorphism of it which does not have a conditional expectation. In particular, UHF-algebras $M_{n^{\infty}}$, $n\in\N\setminus\{1\}$, the Cuntz algebras $\mathcal{O}_n$, $n\in\N\cup\{\infty\}\setminus\{ 1 \}$, and the irrational rotation algebras $\mathcal{A}_{\theta}$, $\theta\in\R\setminus\Q$ have such unital endomorphisms.

\item[\rm (ii)] For any supernatural number $\frak n$, there exists a unital embedding of \Js{} into the UHF-algebra of type $\frak n$ which does not have a conditional expectation. In other words, for any given pair of relatively prime supernatural numbers we have two UHF-algebras and unital embeddings satisfying the condition in Proposition \ref{Prop3.2}.

\item[\rm (iii)]
For relatively prime supernatural numbers $\frak p$ and $\frak q$, there exists a unital embedding of $I(M_{\frak p}, M_{\frak q})$ into \Js{} which does not have a conditional expectation. (This result is a variation of \cite[Proposition 3.3]{RW} involving the expectationless condition.)
\end{item}
 \end{corollary}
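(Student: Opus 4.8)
The plan is to obtain all three parts as special cases of Theorem \ref{Thm1.1}~{\rm(ii)} by feeding in suitable pairs $(A,B)$; the only work beyond quoting that theorem is to check, in each case, that the algebras involved are unital, nuclear and $\mathcal{Z}$-absorbing and that the required unital embedding $B\hookrightarrow A$ is available. I would use throughout that $\mathcal{Z}$ itself and every (infinite-dimensional) UHF algebra are unital, nuclear and $\mathcal{Z}$-absorbing.

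For {\rm(i)}, given a unital nuclear $\mathcal{Z}$-absorbing \Cs{} $A$, I would apply Theorem \ref{Thm1.1}~{\rm(ii)} with $B=A$ and the identity $\id_A$ as the given unital embedding; the output is a unital embedding $A\to A$, that is, a unital endomorphism, admitting no conditional expectation. For the listed examples it then remains only to recall that each is unital, nuclear and $\mathcal{Z}$-absorbing: the UHF algebras $M_{n^\infty}$ are strongly self-absorbing; the Cuntz algebras $\mathcal{O}_n$ for $2\le n\le\infty$ are Kirchberg algebras, hence $\mathcal{O}_\infty$- and a fortiori $\mathcal{Z}$-absorbing; and the irrational rotation algebras $\mathcal{A}_\theta$ are simple unital nuclear AT-algebras of real rank zero with a unique trace, hence $\mathcal{Z}$-absorbing.

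For {\rm(ii)}, I would invoke Theorem \ref{Thm1.1}~{\rm(ii)} with $A=M_{\frak n}$, the UHF algebra of type $\frak n$, and $B=\mathcal{Z}$. Both are unital, nuclear and $\mathcal{Z}$-absorbing, and a unital embedding $\mathcal{Z}\hookrightarrow M_{\frak n}$ exists because $M_{\frak n}\cong M_{\frak n}\otimes\mathcal{Z}$ contains the unital copy $1_{M_{\frak n}}\otimes\mathcal{Z}$. The theorem then produces a unital embedding $\mathcal{Z}\to M_{\frak n}$ with no conditional expectation; applying this to each member of a relatively prime pair $(\frak p,\frak q)$ recovers exactly the conclusion of Proposition \ref{Prop3.2} for an arbitrary relatively prime pair, which is the reformulation stated in {\rm(ii)}.

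For {\rm(iii)} the target is a unital embedding $I(M_{\frak p},M_{\frak q})\hookrightarrow\mathcal{Z}$ with no conditional expectation, which I would again read off from Theorem \ref{Thm1.1}~{\rm(ii)}, now with $A=\mathcal{Z}$ and $B=I(M_{\frak p},M_{\frak q})$. A unital embedding $B\to A$ is supplied by \cite[Proposition 3.3]{RW}, and $\mathcal{Z}$ is unital nuclear $\mathcal{Z}$-absorbing; the one genuinely non-formal point is to verify that $I(M_{\frak p},M_{\frak q})$ is unital nuclear $\mathcal{Z}$-absorbing. Nuclearity is immediate, since $I(M_{\frak p},M_{\frak q})$ sits in an extension of $M_{\frak p}\oplus M_{\frak q}$ by $C_0((0,1))\otimes M_{\frak p\frak q}$, all of which are nuclear. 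For $\mathcal{Z}$-absorption I would regard $I(M_{\frak p},M_{\frak q})$ as a $C([0,1])$-algebra whose fibre over an interior point is $M_{\frak p\frak q}$ and whose fibres over the endpoints are $M_{\frak p}$ and $M_{\frak q}$, all UHF and hence $\mathcal{Z}$-absorbing. Since $[0,1]$ is one-dimensional and $\mathcal{Z}$ is $K_1$-injective and strongly self-absorbing, the theorem of Hirshberg, R\o rdam and Winter on $C(X)$-algebras with strongly self-absorbing fibres (see \cite{HRW}) forces $I(M_{\frak p},M_{\frak q})$ to be $\mathcal{Z}$-absorbing, completing the hypotheses of Theorem \ref{Thm1.1}~{\rm(ii)}.

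This fibrewise $\mathcal{Z}$-stability argument is the only step I expect to require care; everything else is a formal specialization of Theorem \ref{Thm1.1}~{\rm(ii)}. Should one prefer to avoid quoting the $C(X)$-algebra result, an alternative is to produce an expectationless unital endomorphism of $I(M_{\frak p},M_{\frak q})$ directly and then post-compose it with the R\o rdam--Winter embedding, using the elementary principle that if $\alpha\colon C\to D$ has no conditional expectation and $\beta\colon D\to E$ is a unital embedding then $\beta\circ\alpha$ has none either (a conditional expectation $F\colon E\to C$ for $\beta\circ\alpha$ would make $F\circ\beta$ one for $\alpha$); but establishing $\mathcal{Z}$-absorption and invoking Theorem \ref{Thm1.1}~{\rm(ii)} seems the most economical route.
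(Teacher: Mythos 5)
Your proposal is correct and is essentially the paper's own proof: parts (ii) and (iii) specialize Theorem \ref{Thm1.1} (ii) to $(A,B)=(M_{\frak n},\mathcal{Z})$ and $(A,B)=(\mathcal{Z}, I(M_{\frak p},M_{\frak q}))$ exactly as the paper does, and your part (i) (Theorem \ref{Thm1.1} (ii) with $B=A$ and $\iota=\id_A$) amounts to the same tensor argument the paper writes out directly, namely that $\id_A\otimes\varphi$ is an expectationless endomorphism of $A\otimes\mathcal{Z}\cong A$, where $\varphi$ is the endomorphism from Theorem \ref{Thm1.1} (i). The only deviation is in (iii): the paper simply cites \cite[Corollary 3.2]{RW} for $\mathcal{Z}$-absorption of $I(M_{\frak p},M_{\frak q})$, whereas you re-derive that fact via the Hirshberg--R\o rdam--Winter $C(X)$-algebra theorem---which is the standard proof of the cited result, so this is quoting versus inlining (note, though, that this reference is not in the paper's bibliography, and the fibrewise argument requires $\frak p$, $\frak q$ to be of infinite type so that all fibres are infinite-dimensional UHF algebras).
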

 \begin{proof}
 \noindent{\rm (i)}
 Considering a unital endomorphism $\varphi$ of \Js{} which does not have a conditional expectation by Theorem \ref{Thm1.1} {\rm (i)}, for a unital nuclear \Js-absorbing \Cs{} $A$ it is obvious that the endomorphism $\id_A\otimes \varphi$ of $A\otimes \mathcal{Z}\cong A$ does not have a conditional expectation. 
 
 \noindent{\rm (ii)} 
 It is well-known that both the UHF-algebra $M_{\frak n}$ of type ${\frak n}$ and \Js{} absorb  \Js{} tensorially \cite{JS}. By taking a unital embedding of \Js{} into $M_{\frak n}$ (for example the canonical embedding $\mathcal{Z}\rightarrow\mathcal{Z}\otimes M_{\frak n}\cong M_{\frak n}$), the statement follows from Theorem \ref{Thm1.1} {\rm (ii)} directly.

\noindent{\rm (iii)}
By \cite[Corollary 3.2]{RW} and \cite{JS}, we see that  both $I(M_{\frak p}, M_{\frak q})$ and \Js{} absorb \Js{} tensorially. Furthermore, from \cite[Proposition 3.3]{RW} there exists a unital embedding of $I(M_{\frak p}, M_{\frak q})$ into \Js. Then the statement also follows from Theorem \ref{Thm1.1} {\rm (ii)}. 
 \end{proof}
 
 For a \Cs{} $A$, we denote by $\End(A)$ the set of all endomorphisms of $A$ and equip $\End(A)$ with the point-norm topology in the following application of Theorem \ref{Thm1.1}.
 
 \begin{corollary}\label{Cor3.4}
 For a separable \Js{}-absorbing \Cs{} $A$, the set of endomorphisms of $A$ without conditional expectation is dense in $\End(A)$.
 \end{corollary}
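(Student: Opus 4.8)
The plan is to show that every $\alpha\in\End(A)$ lies in the point-norm closure of the expectationless endomorphisms, by composing $\alpha$ with a fixed endomorphism that is simultaneously \emph{expectationless} and \emph{approximately inner}, the latter coming from strong self-absorption of \Js. Since $A$ is separable and \Js-absorbing, I would fix an isomorphism $\Theta\colon A\otimes\mathcal{Z}\to A$. Let $\varphi$ be a unital endomorphism of \Js{} without a conditional expectation, as provided by Theorem \ref{Thm1.1} {\rm (i)}, and set
\[\gamma=\Theta\circ(\id_A\otimes\varphi)\circ\Theta^{-1}\in\End(A).\]

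First I would record two properties of $\gamma$. It has no conditional expectation: if $E$ were one, then $E'=\Theta^{-1}\circ E\circ\Theta$ would be a conditional expectation for $\id_A\otimes\varphi$, and for any state $\omega$ of $A$ the slice $z\mapsto(\omega\otimes\id_\mathcal{Z})\bigl(E'(1_A\otimes z)\bigr)$ would be a conditional expectation for $\varphi$ (exactly the slicing in the proof of Theorem \ref{Thm1.1} {\rm (ii)}), contradicting the choice of $\varphi$. On the other hand, $\gamma$ is approximately unitarily equivalent to $\id_A$: because \Js{} is strongly self-absorbing, the unital endomorphism $\varphi$ is approximately unitarily equivalent to $\id_\mathcal{Z}$, say implemented by unitaries $v_n\in\mathcal{Z}$; then $\id_A\otimes\varphi$ is approximately unitarily equivalent to $\id_{A\otimes\mathcal{Z}}$ through $1_A\otimes v_n$, and transporting by $\Theta$ shows that $\gamma$ is approximately unitarily equivalent to $\id_A$ through $w_n=\Theta(1_A\otimes v_n)$.

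Now fix $\alpha\in\End(A)$, a finite set $F\subset A$, and $\varepsilon>0$, and set $\beta=\alpha\circ\gamma$. If $\beta$ admitted a conditional expectation $E_\beta$, then $E_\beta\circ\alpha$ would be a conditional expectation for $\gamma$, since $(E_\beta\circ\alpha)\circ\gamma=E_\beta\circ\beta=\id_A$; this contradicts the first property, so $\beta$ has no conditional expectation, and the same then holds for every unitary conjugate of $\beta$. Using the second property, choose a unitary $w$ with $\|\gamma(a)-waw^*\|<\varepsilon$ for all $a\in F$, and put $\beta'=\Ad(\alpha(w)^*)\circ\beta\in\End(A)$. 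Then $\beta'$ still has no conditional expectation, while for $a\in F$
\[\beta'(a)=\alpha(w)^*\,\alpha(\gamma(a))\,\alpha(w)\approx\alpha(w)^*\,\alpha(waw^*)\,\alpha(w)=\alpha(a),\]
the middle approximation being within $\varepsilon$ because $\alpha$ is contractive. Hence $\|\beta'(a)-\alpha(a)\|<\varepsilon$ on $F$, and as $\alpha$, $F$, $\varepsilon$ were arbitrary, density follows.

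The step I expect to be the main obstacle is the approximate innerness of $\varphi$: one must use that the expectationless endomorphism of \Js{} from Theorem \ref{Thm1.1} {\rm (i)}, although it admits no conditional expectation, is nevertheless approximately unitarily equivalent to the identity — this is precisely where strong self-absorption of \Js{} (every unital endomorphism of a strongly self-absorbing \Cs{} is approximately inner) enters. A secondary point requiring care is that the final conjugation by $\alpha(w)^*$ both preserves expectationlessness and keeps $\beta'$ inside $\End(A)$; in the non-unital case one replaces $\alpha(w)$ by a suitable unitary in the multiplier algebra and the slicing state $\omega$ by an approximate unit, and the argument goes through unchanged.
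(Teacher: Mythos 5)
Your argument is correct for unital $A$, and it runs on the same fuel as the paper's proof: the expectationless endomorphism $\varphi$ of \Js{} from Theorem \ref{Thm1.1} (i), the Toms--Winter theory of strongly self-absorbing algebras, and a slicing argument transferring conditional expectations back to $\varphi$. The assembly, however, differs. The paper works locally: for each triple $(\alpha,F,\varepsilon)$ it invokes \cite[Theorem 2.2]{TW} to get a unitary $u$ in the multiplier algebra of $A\otimes\mathcal{Z}$ with $\Ad u\circ\Phi$ close to $(\,\cdot\,)\otimes 1_{\mathcal{Z}}$ on $F\cup\alpha(F)$, and sets $\widetilde{\alpha}=(\Ad u\circ\Phi)^{-1}\circ(\alpha\otimes\varphi)\circ\Ad u\circ\Phi$, transferring expectations by slicing against $\alpha(h)\otimes(\,\cdot\,)$ with $\omega(h)=1$. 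You instead isolate a single ``universal'' perturbation $\gamma=\Theta\circ(\id_A\otimes\varphi)\circ\Theta^{-1}$ that is simultaneously expectationless and approximately inner (the latter via the fact that unital endomorphisms of a strongly self-absorbing algebra are approximately inner --- a different consequence of \cite{TW} than the one the paper cites), and then approximate $\alpha$ by inner perturbations of $\alpha\circ\gamma$. Your transfer step, that $E_{\beta}\circ\alpha$ is a conditional expectation for $\gamma$, is slicker than the paper's, since pre-composition with a $*$-homomorphism preserves complete positivity and contractivity. This modularity is a real advantage; its price shows up in the non-unital case, which the paper handles uniformly precisely because it never applies $\alpha$ to anything outside $A$.

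That price is the one genuine defect, and your proposed repair does not work as stated. The corollary does not assume $A$ unital; then $w$ lives in the multiplier algebra $M(A)$, and $\alpha(w)$ is undefined: an arbitrary $\alpha\in\End(A)$ need not be nondegenerate (its image may lie in a proper hereditary subalgebra), hence need not extend to $M(A)$, so there is no candidate for ``a suitable unitary in the multiplier algebra.'' The fix is cosmetic but must be said: since $\Ad(\alpha(w)^*)\circ\alpha\circ\gamma=\alpha\circ\Ad(w^*)\circ\gamma$ whenever both sides make sense, define directly $\beta'=\alpha\circ\Ad(w^*)\circ\gamma$, which lies in $\End(A)$ because $\Ad(w^*)\in\Aut(A)$ for any unitary $w\in M(A)$; then $\|\beta'(a)-\alpha(a)\|\le\|\gamma(a)-waw^*\|<\varepsilon$ on $F$, and a conditional expectation $E'$ for $\beta'$ yields the conditional expectation $E'\circ\alpha\circ\Ad(w^*)$ for $\gamma$, exactly as in your argument. (This rewriting also cleans up the unital case when $\alpha$ itself is not unital, where $\alpha(w)$ is only a partial isometry and ``unitary conjugate'' is a misnomer, although there your computation survives because $\beta$ takes values in the corner $\alpha(1_A)A\alpha(1_A)$.) Likewise, the slicing of a conditional expectation for $\id_A\otimes\varphi$ should not be repaired with ``an approximate unit'' but with the paper's trick: choose a positive $h\in A$ and a state $\omega$ of $A$ with $\omega(h)=1$ and slice against $h\otimes z$, so that the relation $(\id_A\otimes\varphi)(h\otimes z)=h\otimes\varphi(z)$ gives $(\omega\otimes\id_{\mathcal{Z}})$ applied to the expectation of $h\otimes\varphi(z)$ equal to $\omega(h)z=z$. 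With these two substitutions your proof is complete in full generality.
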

 \begin{proof}
 Let $\Phi$ be an isomorphism from $A$ onto $A\otimes \mathcal{Z}$, and let $\varphi$ the unital endomorphism of \Js{} obtained in Theorem \ref{Thm1.1} {\rm (i)}. For $\alpha\in\End(A)$, a finite subset $F$ of $A$, and $\varepsilon >0$, it suffices to find $\widetilde{\alpha}\in \End(A)$ which does not have a conditional expectation but has $\left\|\widetilde{\alpha}(x)-\alpha(x)\right\|<\varepsilon$ for all $x\in F$.
 
 Since \Js{} is strongly self-absorbing, applying \cite[Theorem 2.2]{TW} we obtain a unitary $u$ in the multiplier algebra of $A\otimes\mathcal{Z}$ such that 
 \[\left\lVert\Ad u\circ\Phi(x) -x\otimes 1_{\mathcal{Z}}\right\rVert<\varepsilon/2\quad\text{for all }x\in F\cup\alpha(F).\]
  Then the desired endomorphism $\widetilde{\alpha}$ of $A$ can be obtained by $\widetilde{\alpha}=(\Ad u\circ\Phi)^{-1}\circ(\alpha\otimes\varphi)\circ\Ad u\circ\Phi$. 
 Actually, if there exists a conditional expectation for $\widetilde{\alpha}$, then $\alpha\otimes\varphi$ also has a conditional expectation $\widetilde{E}$. Taking a positive element $h\in A$ and a state $\omega$ of $A$ with $\omega(h)=1$, we have a conditional expectation $E$ for $\varphi$ by $E(x)=(\omega\otimes\id_{\mathcal{Z}})\circ\widetilde{E}(\alpha(h)\otimes x)$ for $x\in \mathcal{Z}$. Besides, it follows that $\|\widetilde{\alpha}(x)-\alpha(x)\|$
\vspace{-5pt}
\begin{align*} 
&\leq \|(\alpha\otimes\varphi)\circ\Ad u \circ\Phi(x)-(\alpha\otimes\varphi)(x\otimes 1_{\mathcal{Z}})\| 
 + \|\alpha(x)\otimes1_{\mathcal{Z}}-\Ad u\circ\Phi\circ\alpha(x)\| \\
 &<\varepsilon/2+\varepsilon /2 =\varepsilon\quad\text{for } x\in F.
 \end{align*}
 \end{proof}

\section{Non-transportable \Cs s}
Endomorphisms without conditional expectations were studied in \cite[Appendix B, Section 6]{EK} to consider the question: \emph{Is it true that the Cuntz algebra $\mathcal{O}_2$, or a unital separable infinite dimensional AF-algebra is transportable in $\mathcal{O}_2$?} Our main result can be applied to give a negative answer for $\mathcal{O}_2$ and simple AF-algebras, more generally for unital separable nuclear \Js-absorbing \Cs s.

We recall the definition of transportable \Cs s in $\mathcal{O}_2$ introduced by E. Kirchberg.

\begin{definition}\label{DefTrans}
For a unital separable \Cs{} $A$, a unital embedding $\iota$ of $A$ into $\mathcal{O}_2$ is called \emph{in general position} if there exists a unital embedding of $\mathcal{O}_2$ into the relative commutant \Cs{} $\Image(\iota)'\cap \mathcal{O}_2$ of the image of $\iota$.

A unital separable nuclear \Cs{} $A$ is called \emph{transportable in 
$\mathcal{O}_2$}, if for any two unital embeddings $\iota$ and $\eta$ of $A$ into $\mathcal{O}_2$ in general position there exists an automorphism $\alpha$ of $\mathcal{O}_2$ such that $\alpha\circ \iota=\eta$.
\end{definition}

\begin{theorem}\label{Thm4.2}
Any unital separable nuclear \Js-absorbing \Cs{} is not transportable in $\mathcal{O}_2$. In particular,\,all Kirchberg algebras, unital separable \Js-absorbing $AF$-algebras, and irrational rotation algebras are not transportable in $\mathcal{O}_2$. 
\end{theorem}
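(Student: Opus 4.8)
The plan is to show that being transportable in $\mathcal{O}_2$ forces every embedding in general position to admit a conditional expectation, and then to contradict this using the expectationless embeddings produced by Theorem \ref{Thm1.1}. So I first would recall the known structural fact (due to Kirchberg) that the canonical, or ``diagonal,'' embedding of $A$ into $\mathcal{O}_2$ is in general position and does admit a conditional expectation. Concretely, using a unital embedding of $A$ into $\mathcal{O}_2$ together with a second commuting copy of $\mathcal{O}_2$ in the relative commutant, one can build a conditional expectation by averaging over the Cuntz-algebra structure; this is the embedding against which all others are to be transported. The point is that a conditional expectation is a homotopy/conjugacy invariant in the following sense: if $\alpha$ is an automorphism of $\mathcal{O}_2$ and $E$ is a conditional expectation for a unital embedding $\iota$, then $\alpha\circ E\circ\alpha^{-1}$ is a conditional expectation for $\alpha\circ\iota$.

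Next I would make this invariance precise. Suppose $A$ is unital separable nuclear and \Js-absorbing, and suppose toward a contradiction that $A$ is transportable in $\mathcal{O}_2$. The aim is to exhibit two unital embeddings $\iota,\eta$ of $A$ into $\mathcal{O}_2$, both in general position, such that $\iota$ admits a conditional expectation but $\eta$ does not; transportability would then give an automorphism $\alpha$ with $\alpha\circ\iota=\eta$, and conjugating the expectation for $\iota$ by $\alpha$ produces one for $\eta$, a contradiction. For $\iota$ I would take the canonical in-general-position embedding with its averaging conditional expectation. For $\eta$ I would use Theorem \ref{Thm1.1}(i): since $A$ is \Js-absorbing we may write $A\cong A\otimes\mathcal{Z}$, and the endomorphism $\id_A\otimes\varphi$ (with $\varphi$ the expectationless endomorphism of \Js) already has no conditional expectation, as recorded in Corollary \ref{Cor3.3}(i). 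Composing this expectationless self-embedding of $A$ with the canonical in-general-position embedding of $A$ into $\mathcal{O}_2$ yields the desired $\eta$.

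The two technical points I would need to verify are that $\eta$ really is in general position and that it really fails to admit a conditional expectation. For the first, in-general-position is about the relative commutant of the image; composing with an endomorphism of $A$ only shrinks or relocates the image inside $\mathcal{O}_2$, and one checks that the commuting copy of $\mathcal{O}_2$ persists, so general position is preserved. For the second, I would use the standard factorization argument already deployed in the proofs of Theorem \ref{Thm1.1} and Corollary \ref{Cor3.3}: if $\eta=\iota_0\circ\alpha_0$ with $\alpha_0$ an expectationless embedding of $A$ into itself and $\iota_0$ a fixed embedding into $\mathcal{O}_2$, then any conditional expectation $E_\eta$ for $\eta$ would yield a conditional expectation $E_\eta\circ\iota_0$ for $\alpha_0$, contradicting expectationlessness of $\alpha_0$. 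The special classes in the ``in particular'' clause (Kirchberg algebras, \Js-absorbing $AF$-algebras, irrational rotation algebras) then follow at once, since each is unital, separable, nuclear, and \Js-absorbing.

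The main obstacle I anticipate is the very first step: cleanly establishing that the canonical embedding of $A$ into $\mathcal{O}_2$ is in general position \emph{and} carries a conditional expectation, and that transportability transfers this expectation along $\alpha$. The general-position hypothesis is exactly what supplies the extra commuting $\mathcal{O}_2$ used to average, so the construction of the expectation for $\iota$ is not purely formal and must be tied carefully to Kirchberg's $\mathcal{O}_2$-embedding theory; once that is in hand, the conjugation-invariance of conditional expectations under automorphisms and the factorization contradiction are routine.
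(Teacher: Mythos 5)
Your proposal follows essentially the same route as the paper's proof of Theorem \ref{Thm4.2}: use Kirchberg's theorem to obtain a unital embedding $\iota$ of $A$ into $\mathcal{O}_2$ in general position that admits a conditional expectation (the paper gets the expectation from \cite[Theorem 6.3.12]{Ror0} and achieves general position via $\mathcal{O}_2\otimes\mathcal{O}_2\cong\mathcal{O}_2$ and a state), compose $\iota$ with the expectationless endomorphism of $A$ from Corollary \ref{Cor3.3} (i) to produce a second in-general-position embedding $\eta$ with no conditional expectation (by the same factorization argument you give), and derive the contradiction by transporting the expectation along the automorphism. The only cosmetic difference is your formula $\alpha\circ E\circ\alpha^{-1}$, which presumes the expectation maps onto the image subalgebra; with the paper's definition (a map $\mathcal{O}_2\to A$) the correct expression is $E_{\iota}\circ\alpha^{-1}$, exactly as the paper writes it.
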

\begin{proof}
The famous Kirchberg's theorem \cite{K1} (see also \cite[Theorem 6.3.12]{Ror0},) allows us to obtain an embedding $\iota$ of $A$ into $\mathcal{O}_2$ and a conditional expectation $E_{\iota}$ for $\iota$. Replacing $\mathcal{O}_2$  with $\iota(1_A)\mathcal{O}_2\iota(1_A)\cong \mathcal{O}_2$, we may assume that $\iota$ is a unital embedding. Considering $\mathcal{O}_2\otimes\mathcal{O}_2\cong\mathcal{O}_2$ and a state of $\mathcal{O}_2$, we may further assume that $\iota$ is in general position. 
By Corollary \ref{Cor3.3} {\rm (i)}, there exists a unital endomorphism $\varphi$ of $A$ which does not have a conditional expectation. Define a unital embedding $\eta$ of $A$ into $\mathcal{O}_2$ by $\eta=\iota\circ\varphi$, which is also in general position. Note that $\eta$ does not have a conditional expectation. So, if there is an automorphism $\alpha$ of $\mathcal{O}_2$ satisfying $\alpha\circ\iota=\eta$, then $E_{\iota}\circ\alpha^{-1}$ becomes a conditional expectation for $\eta$, which is a contradiction. 
\end{proof}

\noindent{\bf Acknowledgements.}\quad  The author would like to thank  Professor Masaki Izumi, and Professor Narutaka Ozawa for helpful comments on this research. He also wish to express his gratitude to Professor Guihua Gong, Professor Huaxin Lin and the organizers of Special Week on Operator Algebras 2023.

\noindent Yasuhiko Sato \\
Graduate School of Mathematics, \\
Kyushu University, \\
744 Motoka, Nishi-ku, Fukuoka\\ 
819-0395\quad
Japan \\
e-mail: ysato@math.kyushu-u.ac.jp

\end{document}